\newtheorem{thm}{Theorem}
\newtheorem{prop}[thm]{Proposition}
\newtheorem{lem}[thm]{Lemma}
\newtheorem{conj}[thm]{Conjecture}
\theoremstyle{definition}
\newtheorem{defn}[thm]{Definition}
\newtheorem{rem}[thm]{Remark}
\newcommand{\ddb}{\sqrt{-1}\partial\bar{\partial}}
\renewcommand{\phi}{\varphi}
\renewcommand{\[}{\begin{equation}}
\renewcommand{\]}{\end{equation}}
\newcommand{\bigslant}[2]{{\left.\raisebox{.2em}{$#1$}\middle/\raisebox{-.2em}{$#2$}\right.}}
\begin{document}
\title{The J-flow and stability}
\author{Mehdi Lejmi}
\address{D\'epartement de Math\'ematiques, Universit\'e Libre de Bruxelles CP218, Boulevard du Triomphe, Bruxelles 1050, Belgique. }
\email{mlejmi@nd.edu}
\author{G\'abor Sz\'ekelyhidi}
\address{Department of Mathematics, University of Notre Dame, Notre
  Dame, IN 46615}
\email{gszekely@nd.edu}

\begin{abstract}
We study the J-flow from the point of view of an
algebro-geometric stability condition. In terms of this we give a
lower bound for the natural associated energy functional, and we show that
the blowup behavior found by Fang-Lai~\cite{FL12} is reflected by the
optimal destabilizer. Finally we prove a general existence result on
complex tori. 
\end{abstract}
\maketitle

\section{Introduction}
The J-flow was introduced by Donaldson~\cite{Don99} from the point of
view of moment maps, as well as Chen~\cite{Chen04} in his study of the
Mabuchi energy. To state the equation, let $(M,\omega)$ be a compact
K\"ahler manifold of dimension $n$, and let $\alpha$ be a second
K\"ahler metric on $M$ which is unrelated to $\omega$. The J-flow is
the parabolic equation
\[ \begin{aligned}
  \frac{\partial}{\partial t} \omega(t) &=
  -\ddb\Lambda_{\omega(t)}\alpha \\
  \omega(0) &= \omega,
\end{aligned}\]
where $\Lambda$ denotes the trace. The stationary solutions of this
flow are metrics $\omega$ such that
\[ \label{eq:Jeq} \Lambda_\omega\alpha = c, \]
where $c$ is a constant, which can be calculated from the K\"ahler
classes of $\omega$ and $\alpha$ using the equation
\[ \int_M \alpha\wedge\frac{\omega^{n-1}}{(n-1)!} = c\int_M
\frac{\omega^n}{n!}. \]
It was shown by Song-Weinkove~\cite{SW04} (see also
Weinkove~\cite{Wei04,W06})
that when a solution to
Equation~\eqref{eq:Jeq} exists, then the J-flow converges to this
solution. In \cite{SW04} the following necessary and sufficient condition was
given for the existence of a solution:
\begin{quotation} There exists a metric $\omega'\in [\omega]$ such that
  \[ c\omega'^{n-1} - (n-1)\omega'^{n-2}\wedge\alpha > 0, \]
  where the positivity means positivity of $(n-1,n-1)$-forms.
\end{quotation}
Unfortunately this condition is hard to check in concrete examples,
and it is not even clear whether the condition depends on the choice
of $\alpha$ in its K\"ahler class. In this paper we propose a new
numerical condition, which we conjecture is equivalent to existence of
a solution.

\begin{conj} \label{conj:main}
  A solution to Equation~\eqref{eq:Jeq} exists if and only
  if for all $p$-dimensional subvarieties $V\subset M$, where
  $p=1,2,\ldots,n-1$, we have
  \[ \label{eq:stab}
  \int_V c\omega^p - p\omega^{p-1}\wedge\alpha > 0. \]
\end{conj}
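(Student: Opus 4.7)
The plan is to split the conjecture into its easy necessity direction and its hard sufficiency direction.

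For necessity, suppose $\omega'\in[\omega]$ solves $\Lambda_{\omega'}\alpha=c$. Since the integral in \eqref{eq:stab} depends only on the classes $[\omega]$ and $[\alpha]$, I may replace $\omega$ by $\omega'$ throughout. At each point of $M$ I simultaneously diagonalize $\omega'$ and $\alpha$, producing positive eigenvalues $\lambda_1,\ldots,\lambda_n$ of $\alpha$ with respect to $\omega'$ that satisfy $\sum_j\lambda_j=c$. A direct multinomial expansion then shows
\[ c(\omega')^p - p(\omega')^{p-1}\wedge\alpha = p!\sum_{|I|=p}\Big(\sum_{j\notin I}\lambda_j\Big)\prod_{i\in I}\beta_i, \]
where the $\beta_i$ are the diagonalizing $(1,1)$-forms. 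Each coefficient $\sum_{j\notin I}\lambda_j$ is strictly positive for $p\leq n-1$, so this $(p,p)$-form is strongly positive and its integral over any $p$-dimensional subvariety is positive.

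For sufficiency, I would run a continuity method: deform $\alpha$ to $\alpha_s=\alpha+s\chi$ for an auxiliary K\"ahler form $\chi$ and parameter $s\in[0,S]$, starting at $s=S$ large, where Song-Weinkove's sufficient condition is easy to verify, and decreasing $s$ to $0$. Openness at each $s$ is a standard implicit function theorem argument using ellipticity of the linearization of $u\mapsto \Lambda_{\omega+\ddb u}\alpha_s$. Closedness is the entire difficulty: if the family degenerates at some $s_0$, one must extract a $p$-dimensional analytic subvariety $V\subset M$ on which $\int_V c\omega^p - p\omega^{p-1}\wedge\alpha_{s_0}\leq 0$, contradicting the stability hypothesis applied to $\alpha_{s_0}$ (which inherits the stability of $\alpha$ for $s$ small).

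The main obstacle is precisely this production of a destabilizing analytic subvariety out of the degeneration. The blowup analysis of Fang-Lai~\cite{FL12} suggests that the concentration locus is indeed analytic, and the paper proposes that this locus serves as the optimal destabilizer; making this precise requires pluripotential-theoretic control of mixed Monge-Amp\`ere masses along a weak limit of the K\"ahler potentials, together with a sharp comparison between those masses and the cohomological integral in \eqref{eq:stab}. This is the key technical hurdle, and I would expect only partial results with current tools---for instance, a direct verification on complex tori, where translation symmetry reduces the PDE to a real problem amenable to convex-analytic methods.
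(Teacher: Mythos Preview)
This statement is a conjecture, and the paper does not prove it in full; it establishes necessity, and proves sufficiency only when $\dim M=2$.

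Your necessity argument is correct and is essentially the paper's ``direct'' argument. The paper phrases it slightly differently: rather than showing that $c\omega^p-p\omega^{p-1}\wedge\alpha$ is a strongly positive $(p,p)$-form on $M$, it first restricts to $V$ and observes that $\Lambda_{\omega_V}\alpha_V<\Lambda_\omega\alpha=c$ (since restriction discards some eigenvalues), whence $c\omega_V^p-p\omega_V^{p-1}\wedge\alpha_V=(c-\Lambda_{\omega_V}\alpha_V)\omega_V^p>0$ as a top form on $V$. Your computation is a global version of the same observation and has the bonus of making the strong positivity explicit.

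For sufficiency, your continuity-method sketch is reasonable and honest about the gap, but you miss the one case the paper does settle. In dimension $2$ the paper runs essentially the deformation you describe, taking $\alpha_t=\alpha+t\omega$, and notes that by Chen's result it suffices to show $[c\omega-\alpha]$ is K\"ahler. If it fails to be K\"ahler, there is a first time $T$ at which $[c_T\omega-\alpha_T]$ is nef and big but not K\"ahler, and then the Demailly--Paun numerical characterization of the K\"ahler cone produces a curve $V$ with $\int_V(c_T\omega-\alpha_T)=0$; monotonicity in $t$ then gives $\int_V(c\omega-\alpha)\leq 0$, contradicting the hypothesis. So the ``destabilizing subvariety from degeneration'' that you flag as the obstacle is supplied, in dimension $2$, by Demailly--Paun rather than by any blowup analysis of the flow. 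In higher dimensions no analogue is available, and the conjecture remains open, exactly as you anticipate.
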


It is straightforward to show that this is indeed a necessary
condition. On the other hand one can also naturally arrive at this
condition from the point of view of an algebro-geometric stability
condition, analogous to K-stability, introduced by
Tian~\cite{Tian97}.
In fact, using that the
J-flow arises from a moment map, we can develop a theory parallel to
that of constant scalar curvature K\"ahler metrics and K-stability as
in Donaldson~\cite{Don02,Don05} for instance.

We will now focus on the situation when $\alpha$ and $\omega$
are algebraic, in the sense that they represent the first Chern classes
of ample line bundles. Suppose that $\omega\in c_1(L)$, and let
$M\hookrightarrow \mathbf{P}^N$ be an embedding using sections of
$L^k$ for some large $k$. A test-configuration $\chi$
for $(M,L)$ is obtained
by choosing a $\mathbf{C}^*$-action on $\mathbf{P}^N$, and we will
define an associated invariant $F_\alpha(\chi)$, analogous to the
Donaldson-Futaki invariant. Our first result, in
Section~\ref{sec:Bergman}
is a lower bound for a
natural energy functional in terms of this invariant, analogous to
Donaldson's lower bound for the Calabi functional~\cite{Don05}.

\begin{thm}\label{thm:lower} We have
  \[ \label{eq:lower}
     \inf_{\omega\in c_1(L)}
     \Vert \Lambda_\omega\alpha - c\Vert_{L^2} \geqslant
  \sup_{\chi\text{ test-config}}-\frac{F_\alpha(\chi)}{\Vert \chi\Vert}, \]
  where the $L^2$-norm is computed using $\omega$, and
  $\Vert\chi\Vert$ is a natural norm for test-configurations $\chi$
  for $(M,L)$. 
\end{thm}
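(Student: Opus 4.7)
The plan is to adapt Donaldson's argument for the lower bound on the Calabi functional in \cite{Don05}, replacing the Mabuchi K-energy by the J-functional $\mathcal{J}_\alpha$, whose derivative along a path $\omega_t = \omega + \ddb\phi_t$ in $c_1(L)$ is
\[ \frac{d}{dt}\mathcal{J}_\alpha(\omega_t) = \int_M \dot\phi_t\,\bigl(\Lambda_{\omega_t}\alpha - c\bigr)\,\frac{\omega_t^n}{n!}, \]
so that critical points solve \eqref{eq:Jeq}. The essential analytic input will be the convexity of $\mathcal{J}_\alpha$ along geodesics in the space of K\"ahler potentials, which I want to exploit at the level of one-parameter subgroups of $GL(N_k+1,\mathbf{C})$ acting on Bergman metrics.

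First I would work inside the finite-dimensional Bergman subspace $\mathcal{B}_k \subset c_1(L)$ coming from the projective embedding by sections of $L^k$. This identifies $\mathcal{B}_k$ with the symmetric space $GL(N_k+1,\mathbf{C})/U(N_k+1)$, and a test-configuration $\chi$ for $(M,L)$ furnishes (after passing to a suitable power of $L$) a Hermitian generator $B_k$ of a one-parameter subgroup. By the construction of $F_\alpha(\chi)$ in Section~\ref{sec:Bergman}, the asymptotic slope of $\mathcal{J}_\alpha$ along the Bergman ray generated by $B_k$ should agree with $F_\alpha(\chi)$ up to errors that vanish as $k\to\infty$. Convexity of $\mathcal{J}_\alpha$ along this ray then gives, for any starting point $\omega_k \in \mathcal{B}_k$,
\[ F_\alpha(\chi) \geqslant \frac{d}{dt}\bigg|_{t=0}\mathcal{J}_\alpha\bigl(e^{tB_k}\cdot\omega_k\bigr) = \bigl\langle B_k,\,\mu_k(\omega_k)\bigr\rangle, \]
where $\mu_k$ is the level-$k$ Bergman moment map for the J-flow, and $\mu_k(\omega_k) \to \Lambda_{\omega_k}\alpha - c$ by a Tian-Zelditch style expansion. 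Cauchy-Schwarz then yields
\[ F_\alpha(\chi) \geqslant -\bigl\|\mu_k(\omega_k)\bigr\|_{L^2}\cdot\|B_k\|. \]

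To conclude, I would let $k\to\infty$, simultaneously approximating an arbitrary $\omega \in c_1(L)$ in $C^2$ by Bergman metrics $\omega_k \in \mathcal{B}_k$, and verifying that the Lie algebra norm $\|B_k\|$ converges to the test-configuration norm $\|\chi\|$. Taking the infimum over $\omega$ on the left and the supremum over $\chi$ on the right would then produce \eqref{eq:lower}.

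The main obstacle, as in Donaldson's original setting, will be identifying the analytic asymptotic slope of $\mathcal{J}_\alpha$ along a Bergman ray with the algebraic invariant $F_\alpha(\chi)$, and establishing the compatibility of $\|B_k\|$ with $\|\chi\|$ in the limit. This is the J-flow analog of Paul-Tian's matching of analytic and algebraic Futaki invariants, and will absorb the bulk of the asymptotic estimates.
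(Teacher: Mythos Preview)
Your outline is the same Donaldson-style strategy the paper follows: embed by $L^k$, define a finite-dimensional moment map, show monotonicity along one-parameter subgroups, apply Cauchy--Schwarz, and identify the asymptotic slope with the algebraic invariant. Two points are worth flagging, since they are where the actual work lives and your sketch does not yet supply them.

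First, the Bergman-kernel input is not the standard Tian--Zelditch expansion. Because the moment map involves $\alpha$, the paper uses the \emph{weighted} $L^2$ inner product $\langle s,t\rangle = k^n\int_M \langle s,t\rangle_{h^k}\,\alpha\wedge\omega^{n-1}/(n-1)!$ on $H^0(L^k)$; the leading term of the associated Bergman kernel is $(\Lambda_\omega\alpha)^{-1}$ rather than $1$, and this is exactly what makes $\Vert\underline{M}(\phi_k)\Vert$ match $\Vert\Lambda_\omega\alpha - c\Vert_{L^2}$ to leading order. Your ``$\mu_k(\omega_k)\to \Lambda_{\omega_k}\alpha - c$ by a Tian--Zelditch style expansion'' hides this modification.

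Second, and more substantially, the step you correctly identify as the ``main obstacle''---matching the asymptotic slope with $F_\alpha(\chi)$---is not solved by the usual Paul--Tian/Donaldson machinery, because the integrand contains the transcendental form $\alpha$. The paper's device is to write $\alpha$ as a signed average of currents of integration $[D]$ over the linear system $|K|$ (via a Radon-transform argument on projective space). This reduces $\lim_{t\to 0}\int_M \phi_k^{t*}(h_A)\,\alpha\wedge(\phi_k^{t*}\omega_{FS})^{n-1}$ to Chow-weight integrals over generic divisors $D$, which are algebraic and constant outside a Zariski-closed set; that is how $b_0'$ in the definition of $F_\alpha$ actually enters. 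Without some mechanism to ``algebraize'' $\alpha$ in the limit, your convexity-plus-Cauchy--Schwarz argument stalls at exactly this point.
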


A corollary of this result is analogous to Stoppa's
theorem~\cite{Sto08} on the K-stability of constant scalar curvature
K\"ahler manifolds.

\begin{thm}\label{thm:stab}
  If Equation~\eqref{eq:Jeq} has a solution, then
  $F_\alpha(\chi) > 0$ for all test-configurations $\chi$ for $(M,L)$ satisfying
  $\Vert\chi\Vert > 0$.
\end{thm}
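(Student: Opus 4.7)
The first half of the theorem is straightforward. Any solution $\omega_0 \in c_1(L)$ of Equation~\eqref{eq:Jeq} satisfies $\Lambda_{\omega_0}\alpha \equiv c$, so the infimum on the left of \eqref{eq:lower} vanishes. Theorem~\ref{thm:lower} then forces $-F_\alpha(\chi)/\Vert\chi\Vert \leqslant 0$ for every test-configuration $\chi$ with $\Vert\chi\Vert > 0$, giving the semistability bound $F_\alpha(\chi) \geqslant 0$. It remains to exclude the possibility of equality when $\Vert\chi\Vert > 0$.

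For this I would follow the blow-up strategy of Stoppa~\cite{Sto08}. Suppose for contradiction that some test-configuration $\chi$ for $(M,L)$ satisfies $\Vert\chi\Vert > 0$ and $F_\alpha(\chi) = 0$. Pick a point $p \in M$ whose limit $p_0$ in the central fibre of $\chi$ is a smooth $\mathbf{C}^*$-fixed point; simultaneously blowing up $p \in M$ and $p_0$ in the total space of $\chi$ produces, for each small rational $\epsilon > 0$, a test-configuration $\tilde\chi_\epsilon$ for $(\tilde M, L_\epsilon)$, where $\tilde M = \mathrm{Bl}_p M$, $\pi\colon \tilde M \to M$ is the blow-down with exceptional divisor $E$, and $L_\epsilon$ has first Chern class $\pi^* c_1(L) - \epsilon\, c_1(\mathcal{O}(E))$. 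A direct intersection-theoretic computation, modelled on Stoppa's in the cscK setting, should give an expansion
\[ F_{\pi^*\alpha}(\tilde\chi_\epsilon) = F_\alpha(\chi) - c_n\, w\, \epsilon^{n-1} + O(\epsilon^n), \]
with $w$ the weight of the $\mathbf{C}^*$-action at $p_0$, $c_n > 0$ a dimensional constant, and $\Vert\tilde\chi_\epsilon\Vert$ bounded away from zero as $\epsilon \to 0$. Choosing $p$ so that $w > 0$ makes $F_{\pi^*\alpha}(\tilde\chi_\epsilon) < 0$ for all small $\epsilon$.

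To convert this into a contradiction via Theorem~\ref{thm:lower} applied to $(\tilde M, L_\epsilon)$, the blow-up must itself admit a J-equation solution. The main technical input is therefore an Arezzo--Pacard style gluing: if $(M,\omega_0,\alpha)$ has a J-equation solution, then so does $(\tilde M, \tilde\omega_\epsilon, \pi^*\alpha)$ in the class $\pi^*[\omega] - \epsilon^2[E]$ for all small $\epsilon$. I would construct $\tilde\omega_\epsilon$ by gluing $\omega_0$ on $M \setminus B_\epsilon(p)$ to a scaled Burns-type model near $E$, and then perturbing to an exact solution via the implicit function theorem. This gluing is the principal analytic obstacle; however, the J-equation is only of Monge--Amp\`ere type rather than fourth-order, so the linearisation is lower order and the uniform invertibility in suitable weighted norms should be within reach. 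With such a solution on the blow-up in hand, Theorem~\ref{thm:lower} forces $F_{\pi^*\alpha}(\tilde\chi_\epsilon) \geqslant 0$, contradicting the strict negativity above and completing the proof.
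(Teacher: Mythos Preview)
Your semistability argument ($F_\alpha(\chi)\geqslant 0$) is correct and matches the paper. For strict positivity, however, the paper takes a quite different and much cheaper route than the blow-up you propose. Instead of modifying $(M,L)$, the paper perturbs $\alpha$: set $\alpha_t=\alpha-t\omega$ for small $t\geqslant 0$, check by the implicit function theorem that the J-equation $\Lambda_{\omega_t}\alpha_t=c_t$ is still solvable, deduce $F_{\alpha_t}(\chi)\geqslant 0$, and use the linearity $F_{\alpha_t}(\chi)=F_\alpha(\chi)-tF_\omega(\chi)$. The whole argument then reduces to showing the purely algebro-geometric inequality $F_\omega(\chi)>0$ (equivalently $b_0'>nb_0$) for any test-configuration with $\Vert\chi\Vert>0$, which the paper does via a filtration/weight computation on the central fibre. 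No gluing, no blow-up, no new existence theorem is needed.

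Your approach, by contrast, has a genuine obstruction beyond the unproven Arezzo--Pacard step you flag. On the blow-up $\tilde M$ the form $\pi^*\alpha$ is only semipositive: it degenerates along the exceptional divisor $E$ (its restriction to $TE$ vanishes). The entire framework of the paper---the J-equation, the inner product \eqref{eq:inner}, the Bergman kernel expansion $B_k\sim 1/\Lambda_\omega\alpha$, and hence Theorem~\ref{thm:lower}---requires $\alpha$ to be a genuine K\"ahler metric. With $\pi^*\alpha$ degenerate, the expansion \eqref{eq:Bergman} breaks down near $E$, and there is no reason to expect a solution of $\Lambda_{\tilde\omega}(\pi^*\alpha)=c$ in the class $\pi^*[\omega]-\epsilon^2[E]$ at all: along $E$ the only nonzero eigenvalue of $\pi^*\alpha$ is normal to $E$, while $\tilde\omega$ must be of size $\sim\epsilon^2$ there, forcing $\Lambda_{\tilde\omega}(\pi^*\alpha)\sim\epsilon^{-2}$ rather than a constant. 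You could try to repair this by replacing $\pi^*\alpha$ with a K\"ahler form $\pi^*\alpha+\delta\tilde\omega$, but then the expansion of $F$ acquires extra terms and the sign argument is no longer clear. The paper's perturbation of $\alpha$ sidesteps all of this.
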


In Section~\ref{sec:slope} we will study deformation to the normal
cone, which is a particular type of test-configuration studied
extensively by Ross-Thomas~\cite{RT06}. Applying
Theorem~\ref{thm:stab} to deformation to the normal cone of a
subvariety $V$ results in the inequality \eqref{eq:stab}. Note, however,
that this direction of Conjecture~\ref{conj:main} can easily be
checked directly. In this
section we will also show that Conjecture~\ref{conj:main} holds in the
two dimensional case. 

When Equation~\ref{eq:Jeq} has no solution, then it is natural to
conjecture that in Theorem~\ref{thm:lower} equality holds. We will
show this in a special case, namely on the blowup of $\mathbf{P}^3$ in
one point. The J-flow on this, and other similar, manifolds has been
studied carefully by Fang-Lai~\cite{FL12}. Our new contribution can be
summarized as follows.
\begin{thm}
On the blowup $\mathrm{Bl}_p\mathbf{P}^3$ in one point, with $\omega$
and $\alpha$ representing any two K\"ahler classes, equality
holds in Equation~\ref{eq:lower}. In addition the J-flow minimizes the
$L^2$-norm of $\Lambda_\omega\alpha$. 
\end{thm}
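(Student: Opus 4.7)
The plan is to exploit the $U(3)$-symmetry of $\mathrm{Bl}_p\mathbf{P}^3$ to reduce to a one-dimensional problem, apply the explicit J-flow analysis of Fang-Lai~\cite{FL12}, and match the resulting asymptotic $L^2$-norm against the invariant $F_\alpha(\chi)$ of a deformation to the normal cone supplied by Section~\ref{sec:slope}.

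First I would replace $\omega$ and $\alpha$ by $U(3)$-averages in their K\"ahler classes. As the associated energy functional is convex along the group orbit, with derivative essentially $\Lambda_\omega\alpha-c$ via the moment-map interpretation recalled in Section~\ref{sec:Bergman}, this averaging does not increase the infimum of $\Vert\Lambda_\omega\alpha-c\Vert_{L^2}$, and reduces both \eqref{eq:Jeq} and the J-flow to ODEs via the Calabi ansatz: a $U(3)$-invariant K\"ahler metric on $\mathrm{Bl}_p\mathbf{P}^3$ is determined by a momentum profile on an interval between the levels corresponding to the exceptional divisor and to a fixed hyperplane section.

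Next I would use Fang-Lai's description of the long-time behavior of the flow when \eqref{eq:Jeq} has no solution. They show that an explicit subvariety $V$ collapses along the flow, with a specific asymptotic profile, and substituting this into the $L^2$-integral yields a closed-form value for $\lim_{t\to\infty}\Vert\Lambda_{\omega(t)}\alpha-c\Vert_{L^2}$. Since the J-flow is the downward gradient flow of the underlying functional on the space of $U(3)$-invariant metrics, this limit equals the invariant infimum, which by the averaging step coincides with $\inf_{\omega\in c_1(L)}\Vert\Lambda_\omega\alpha-c\Vert_{L^2}$; this gives the second claim of the theorem.

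Finally, I would take $\chi$ to be the deformation to the normal cone of $V$ and use the slope formulas of Section~\ref{sec:slope} to obtain explicit expressions for $F_\alpha(\chi)$ and $\Vert\chi\Vert$ in terms of intersection numbers on $\mathrm{Bl}_p\mathbf{P}^3$. Equality in \eqref{eq:lower} then reduces to a direct comparison with the limit computed above. The hardest step will be this final matching: both the asymptotic momentum-profile integral and the intersection-theoretic formula for $-F_\alpha(\chi)/\Vert\chi\Vert$ are explicit, but verifying their equality requires careful bookkeeping of the constants, and must be carried out separately in each of the regimes where the collapsing subvariety $V$ changes.
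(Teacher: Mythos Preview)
Your outline has the right ingredients up to the point where you produce the destabilizing test-configuration, but the final step is where it breaks down.

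First, a smaller point: the sentence ``since the J-flow is the downward gradient flow of the underlying functional, this limit equals the invariant infimum'' does not do what you want. The J-flow is the gradient flow of the J-functional, not of $\Vert\Lambda_\omega\alpha-c\Vert_{L^2}^2$, so there is no a priori reason the flow should drive the $L^2$-norm to its infimum. In the paper this implication is not argued directly at all; instead one shows that the limit of $\Vert\Lambda_{\omega(t)}\alpha-c\Vert_{L^2}$ along the flow coincides with a lower bound coming from test-configurations via Theorem~\ref{thm:lower}, and both conclusions (equality in \eqref{eq:lower} and minimization by the flow) fall out together. Your decoupling of the two claims does not work.

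The more serious gap is your choice of test-configuration. A single deformation to the normal cone $\chi_{E_0,\kappa}$, even after optimizing over $\kappa$, does not realize the supremum on the right of \eqref{eq:lower}. In the momentum-profile picture, such a $\chi_{E_0,\kappa}$ corresponds to a convex function on $[1,b]$ with a single corner, whereas the optimal destabilizer is the function
\[
h(\tau)=\frac{dF_\infty}{d\tau}+2\frac{F_\infty}{\tau}-c
=\begin{cases} 2\tau^{-1}-c, & 1\le\tau\le\lambda,\\ 2\lambda^{-1}-c, & \lambda\le\tau\le b,\end{cases}
\]
which is genuinely curved on $[1,\lambda]$. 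The paper therefore does \emph{not} use the slope formulas of Section~\ref{sec:slope}; it constructs a sequence $\chi_k$ of toric-type test-configurations given by piecewise-linear rational approximations $h_k$ of $h$ (each of which is a deformation to the normal cone of a thickened scheme supported on $E_0$, not of $E_0$ itself), computes $b_{0,k}$, $b_{0,k}'$, $\Vert\chi_k\Vert$ directly as integrals of $h_k$, and checks by an integration by parts that $-F_\alpha(\chi_k)/\Vert\chi_k\Vert$ converges to exactly the J-flow limit $\bigl(\int_1^b h^2\,\tfrac{1}{2}\tau^2\,d\tau\bigr)^{1/2}$. Your single-$\chi$ computation cannot reproduce this, so the ``final matching'' you anticipate as merely bookkeeping would in fact fail.
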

\noindent
This result is analogous to the second author's work~\cite{GSz07_1} on
the Calabi functional on a ruled surface.

In Section~\ref{sec:torus} we study Conjecture~\ref{conj:main} on
the complex torus $\mathbf{C}^n/(\mathbf{Z}^n+i\mathbf{Z}^n)$,
for $(S^1)^n$-invariant
data. It is easy to see that the inequalities \eqref{eq:stab} are
always satisfied, so we expect that a solution always exists.
In this situation Equation~\ref{eq:Jeq} reduces to an equation
for convex functions on $\mathbf{R}^n$. A generalization of this equation can
be formulated as follows. Let $a_{ij}(x)$ be a smooth $\mathbf{Z}^n$-periodic
symmetric positive definite matrix-valued function. We are trying to
find a $\mathbf{Z}^n$-periodic function $u:\mathbf{R}^n\to \mathbf{R}$
such that $f(x) = |x|^2 + u(x)$ is convex, and
\[\label{eq:torusjeq}
\sum_{i,j} a_{ij}(x) f^{ij}(x) = c,\]
where $f^{ij}$ is the inverse Hessian of $f$, and $c$ is a suitable
constant.
By using the Legendre transform and the
constant rank theorems of Korevaar-Lewis~\cite{KL87} and
Bian-Guan~\cite{BG09} we show that this equation always has a
solution.
\begin{thm}\label{thm:torusthm}
  Equation~\eqref{eq:torusjeq} has a smooth convex
  solution of the form
  $f(x) = |x|^2 + u(x)$, where $u$ is $\mathbf{Z}^n$-periodic.
\end{thm}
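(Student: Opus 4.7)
The strategy is to Legendre-transform \eqref{eq:torusjeq} into a quasilinear elliptic equation on a (rescaled) torus, and then solve it by a continuity method. The constant rank theorems of Korevaar-Lewis and Bian-Guan enter at the critical step of preserving strict convexity of the evolving solution.

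First, if $f(x)=|x|^2+u(x)$ is strictly convex with $u$ being $\mathbf{Z}^n$-periodic, then $\nabla f(x+e_k)=\nabla f(x)+2e_k$, so the Legendre dual $g=f^*$ takes the form $g(y)=|y|^2/4+v(y)$ for some $2\mathbf{Z}^n$-periodic function $v$. The Hessian of $g$ at $y=\nabla f(x)$ equals the inverse Hessian of $f$ at $x$, and $a_{ij}(\nabla g(y))$ is well-defined on the rescaled torus since $\nabla g$ shifts by a lattice vector under $2\mathbf{Z}^n$-translations. Equation \eqref{eq:torusjeq} thus becomes
\begin{equation*}
 \sum_{i,j} a_{ij}(\nabla g(y))\, g_{ij}(y) = c,
\end{equation*}
a quasilinear elliptic PDE for $g$: linear in $\nabla^2 g$, with coefficients depending on $\nabla g$.

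Next, I would run a continuity method by connecting $a_{ij}$ to the identity matrix through a path $a^t_{ij}$ of positive definite symmetric periodic matrix-valued functions. At $t=0$ the equation reduces to $\Delta g=n/2$, trivially solved by $g_0=|y|^2/4$. At a smooth strictly convex $g_t$ the linearization is a uniformly elliptic operator on the torus, invertible modulo constants, so openness follows from the implicit function theorem. Closedness requires uniform $C^{k,\alpha}$ estimates on $v_t$: $C^0$ and $C^1$ come from normalizing $\int v\,dy=0$ and periodicity, while the $C^2$ upper bound should follow from a Calabi-Yau style maximum principle argument applied to $\Delta v$ or $\log\det g_{ij}$.

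The most delicate estimate is the uniform \emph{lower} bound on $\nabla^2 g_t$, without which ellipticity degenerates as $t\to 1$. The equation $\sum a_{ij}(\nabla g)g_{ij}=c$ is of precisely the structural form treated by the constant rank theorems of Korevaar-Lewis and Bian-Guan: for any convex solution on the connected torus, $\nabla^2 g_t$ has constant rank. Since $\nabla^2 g_0=I/2$ has rank $n$, we conclude that $\nabla^2 g_t$ remains of full rank throughout. Combined with the $C^2$ upper bound, this gives uniform ellipticity; Evans-Krylov then upgrades to $C^{2,\alpha}$ and Schauder bootstrap yields smoothness, closing the continuity argument. Inverting the Legendre transform recovers the required solution $f=g^*$ of the original equation. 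I expect the main obstacle to be promoting the qualitative constant-rank statement to a \emph{uniform quantitative} lower bound on the smallest eigenvalue of $\nabla^2 g_t$ along the path, which will likely require a compactness or blow-up argument together with careful verification that the structural hypotheses of the constant rank theorems are met by our specific equation.
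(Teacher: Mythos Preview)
Your approach is essentially the paper's: Legendre-transform to the quasilinear equation $\sum a_{ij}(\nabla g)\,g_{ij}=c$, run a continuity method in $a_{ij}$, and use the Bian--Guan/Korevaar--Lewis constant rank theorem together with a compactness argument for the Hessian lower bound. Two points are worth sharpening. First, the $C^2$ upper bound on $g$ is far simpler than a Calabi--Yau maximum principle: since $g$ is convex, every eigenvalue of $\nabla^2 g$ is nonnegative, and the equation $\sum a_{ij}(\nabla g)\,g_{ij}=c$ with $a_{ij}$ uniformly positive definite bounds each eigenvalue by $c$ divided by the lowest eigenvalue of $(a_{ij})$. This immediately gives a Lipschitz bound on $\nabla g$, hence $C^\alpha$ coefficients, and then ordinary Schauder theory (not Evans--Krylov) yields $C^{2,\alpha}$ and all higher estimates---note the ellipticity constants here come from $a_{ij}$, not from $\nabla^2 g$, so no lower Hessian bound is needed for this step.

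Second, your sentence ``since $\nabla^2 g_0=I/2$ has rank $n$, we conclude that $\nabla^2 g_t$ remains of full rank throughout'' is not what the constant rank theorem gives: it only says that for a \emph{fixed} convex solution the rank of $\nabla^2 g$ is constant in the spatial variable, not that the rank is preserved along the deformation parameter $t$. Along the open interval of $t$ where the continuity method runs, strict convexity is automatic from openness; the danger is only at the closure. The paper resolves this exactly as you anticipate at the end: argue by contradiction, pass to a $C^\infty$ limit $g_\infty$ (using the estimates above, which do not require the lower bound), apply the constant rank theorem to $g_\infty$ to conclude its Hessian is everywhere degenerate, and then observe that $g_\infty$ would be affine along a line, contradicting the periodic structure $g_\infty(y)=\tfrac{1}{4}|y|^2+v(y)$ with $v$ periodic.
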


The J-flow can be generalized to the more general inverse $\sigma_k$-flows
studied by Fang-Lai~\cite{FL12}, and the results of this paper, apart
from Theorem~\ref{thm:torusthm}, extend
to this case without any difficulties. We will discuss this in
Section~\ref{sec:sigmak}.

\subsection*{Acknowledgements}
The first named author
is grateful to G\'abor Sz\'ekelyhidi and the rest of
Department of Mathematics at University of Notre Dame for their hospitality.
The second named author thanks Jeff Diller for several useful
conversations on the topic in the appendix. The
first named author was supported by a FQRNT grant during his visit to University
of Notre Dame. The second named author is
supported in part by NSF grant DMS-1306298.

\section{The Bergman kernel}\label{sec:Bergman}
The goal of this section is to prove Theorem~\ref{thm:lower}. The
method of proof is the same as Donaldson's proof of the analogous
lower bound for the Calabi functional~\cite{Don05}. The main
ingredient is a relevant Bergman kernel expansion.

Let $\omega\in c_1(L)$ for an ample line bundle $L$ over $M$. Choose a
Hermitian metric $h$ on $L$ such that $\omega = \frac{1}{2\pi i}
F(h)$, where $F(h)$ is the curvature form of $h$. On sections of $L^k$
define the inner product
\[\label{eq:inner} \langle s,t\rangle_{L^2} = k^n \int_M \langle s,t\rangle_{h^k}
\alpha \wedge \frac{\omega^{n-1}}{(n-1)!} = \int_M \langle
s,t\rangle_{h^k} (\Lambda_\omega\alpha)\,\frac{(k\omega)^n}{n!}.
\]
Given an orthonormal basis $\{s_0,\ldots,s_{N_k}\}$ of $H^0(L^k)$, define
the ``Bergman kernel''
\[ B_k(x) = \sum_{i=0}^N |s_i(x)|^2_{h^k}. \]
The following asymptotic expansion is analogous to the
Tian-Zelditch-Lu~\cite{Tian90_1,Zel98, Lu98} expansion of the usual
Bergman kernel associated to the metric $\omega$. 
\begin{thm}\label{thm:Bergman}
  We have the asymptotic expansion
  \[\label{eq:Bergman}
  B_k(x) = \frac{1}{\Lambda_\omega\alpha(x)} + O(k^{-1}), \]
  valid in $C^l$ for any $l$. 
\end{thm}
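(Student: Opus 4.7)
The strategy is to recognize Theorem~\ref{thm:Bergman} as a weighted version of the standard Tian--Zelditch--Lu Bergman kernel expansion \cite{Tian90_1,Zel98,Lu98}. The factor of $\Lambda_\omega\alpha$ in \eqref{eq:inner} rescales each orthonormal section, pointwise, by roughly $(\Lambda_\omega\alpha)^{-1/2}$; this leads to $(\Lambda_\omega\alpha)^{-1}$ as the leading term of $B_k(x)$, while the $k^n$ prefactor in \eqref{eq:inner} absorbs the usual $k^n$ growth of the unweighted Bergman kernel.

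I would first establish the pointwise lower bound $B_k(p) \geq \Lambda_\omega\alpha(p)^{-1} + O(k^{-1})$ by means of Tian's peak sections. At $p \in M$, choose K\"ahler normal coordinates $z$ for $\omega$ and a local holomorphic frame $e$ of $L$ for which $-\log|e|^2_h = |z|^2 + O(|z|^3)$. The peak section $s_k = \chi(k^{1/2}z) e^{\otimes k} + v_k$, where $\chi$ is a standard cut-off and $v_k$ is a H\"ormander $L^2$-correction making $s_k$ holomorphic, concentrates in a ball of radius $\sim k^{-1/2}$ around $p$. Since $\Lambda_\omega\alpha$ is smooth and varies by $O(k^{-1/2})$ on such a ball, its weighted $L^2$-norm factors as
\[ \|s_k\|^2_{L^2} = \Lambda_\omega\alpha(p)\bigl(1 + O(k^{-1})\bigr) \|s_k\|^2_{L^2,\textrm{unweighted}}, \]
and the standard unweighted computation (together with the $k^n$ prefactor in \eqref{eq:inner}) gives $\|s_k\|^2_{L^2,\textrm{unweighted}} = |s_k(p)|^2_{h^k}(1 + O(k^{-1}))$. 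The extremal characterization $B_k(p) \geq |s_k(p)|^2_{h^k}/\|s_k\|^2_{L^2}$ then yields the lower bound.

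For the matching upper bound and for the full $C^l$ expansion, the cleanest route is to reinterpret the weighted inner product as an \emph{unweighted} inner product for a modified Hermitian metric on $L^k$. Setting $\tilde h_k := \Lambda_\omega\alpha\cdot h^k$ converts \eqref{eq:inner} (up to the $k^n$ prefactor) into $\int\langle s,t\rangle_{\tilde h_k}\,\omega^n/n!$; then $|s|^2_{h^k} = \Lambda_\omega\alpha(x)^{-1}|s|^2_{\tilde h_k}$, so $B_k(x) = \Lambda_\omega\alpha(x)^{-1}\tilde B_k(x)$, where $\tilde B_k$ is the diagonal Bergman kernel for the data $(L^k,\tilde h_k, \omega^n/n!)$. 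The curvature of $\tilde h_k$ equals $k\omega - (i/2\pi)\partial\bar\partial\log\Lambda_\omega\alpha$, differing from $k\omega$ only by a fixed smooth $(1,1)$-form, so the general Bergman kernel expansion allowing decoupled Hermitian metric and integration volume form (as in Ma--Marinescu) applies and yields $\tilde B_k(x) = 1 + O(k^{-1})$ in $C^l$, as needed.

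The main obstacle is bookkeeping: the $k^n$ factor in \eqref{eq:inner}, the identity $\alpha\wedge\omega^{n-1}/(n-1)! = \Lambda_\omega\alpha\cdot\omega^n/n!$, and the decoupling of the integration measure from the curvature form of $\tilde h_k$ all need to be tracked carefully. Once the setup is in place, uniform $C^l$ control of the error follows from the standard arguments of \cite{Tian90_1,Zel98,Lu98}.
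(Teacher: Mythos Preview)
Your proposal is correct and takes essentially the same approach as the paper: both reduce to the general Bergman kernel expansion of Ma--Marinescu~\cite{MM07} by absorbing the weight $\Lambda_\omega\alpha$ into an auxiliary Hermitian structure (the paper packages it as a metric on the trivial twisting bundle $E$, you absorb it into $\tilde h_k$ on $L^k$; these are the same thing). Your peak-section lower bound is sound but redundant once Ma--Marinescu is invoked, since that result already gives the full two-sided $C^l$ expansion.
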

\begin{proof}
  The expansion follows from Theorem 4.1.1 in
  Ma-Marinescu~\cite{MM07}. Using the notation of \cite{MM07},
  we apply the result to $E$ being the
  trivial line bundle, with metric $\Lambda_\omega\alpha$. The
  endomorphism $\frac{1}{2\pi}\dot{R}^L$ is the identity, so in
  Theorem 4.1.1, equation (4.1.6) we have $\mathbf{b_0} = \mathrm{Id}_E$. In
  particular, by equation (4.1.4) in \cite{MM07} this means that
  \[ \sum_{i=0}^N |s_i(x)|^2_{h^k}(\Lambda_\omega\alpha) = 1 +
  O(k^{-1}), \]
  from which the required result follows. 
\end{proof}
Given an embedding $\phi:M\hookrightarrow\mathbf{P}^{N_k}$ we define
a matrix $M(\phi)$ with entries
\[ M(\phi)_{jk} = \int_M
\phi^*\left(\frac{Z^j\overline{Z}^k}{|Z|^2}\right)\alpha \wedge
\frac{(\phi^* \omega_{FS})^{n-1}}{(n-1)!}, \]
where $Z^j$ are homogeneous coordinates on $\mathbf{P}^{N_k}$ and
$\omega_{FS}$ is the Fubini-Study metric. Let $\underline{M}(\phi)$
denote the trace-free part of $M(\phi)$, so that
\[ \underline{M}(\phi)_{jk} = M(\phi)_{jk} -
\frac{k^{n-1}}{N_k+1}\int_M \alpha\wedge\frac{\omega^{n-1}}{(n-1)!}.\]
Similarly to Proposition 1 in \cite{Don05} we have the following. 
\begin{lem}
  There is a sequence of embeddings $\phi_k:M\hookrightarrow
  \mathbf{P}^{N_k}$ using sections of $L^k$ such that
  \[ \Vert \underline{M}(\phi_k)\Vert \leqslant k^{n/2-1}\Vert
  \Lambda_\omega\alpha - c\Vert_{L^2} + O(k^{n/2-2}). \]
  Here $\Vert \underline{M}\Vert = \mathrm{Tr}(\underline{M}^2)^{1/2}$. 
\end{lem}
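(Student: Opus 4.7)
The plan is to follow Donaldson's approach in \cite{Don05}. Choose an orthonormal basis $\{s_0,\dots,s_{N_k}\}$ of $H^0(L^k)$ with respect to the inner product \eqref{eq:inner}, and let $\phi_k$ be the associated Kodaira embedding. Since $\phi_k^*(Z^j\overline Z^k/|Z|^2) = \langle s_j, s_k\rangle_{h^k}/B_k$, the matrix entries can be written
\[
 M(\phi_k)_{jk} = \int_M\frac{\langle s_j, s_k\rangle_{h^k}}{B_k}\,\alpha\wedge\frac{(\phi_k^*\omega_{FS})^{n-1}}{(n-1)!}.
\]

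Set $g = \Lambda_\omega\alpha$ and $d\mu = g(k\omega)^n/n!$ for the measure used in \eqref{eq:inner}. By \eqref{eq:Bergman} we have $B_k^{-1} = g + O(k^{-1})$ in $C^l$, while $\phi_k^*\omega_{FS} = k\omega + (2\pi)^{-1}\ddb\log B_k = k\omega + O(1)$ in $C^{l-2}$. Expanding $(\phi_k^*\omega_{FS})^{n-1}$, using $\alpha\wedge\omega^{n-1}/(n-1)! = g\,\omega^n/n!$, and rewriting $\omega^n/n!$ via $d\mu$, a direct computation gives
\[
 M(\phi_k)_{jk} = k^{-1}\int_M (g + \psi_k)\langle s_j, s_k\rangle_{h^k}\,d\mu = k^{-1}\langle T_{g+\psi_k}s_j, s_k\rangle_{L^2},
\]
where $\psi_k$ is a function with $\Vert\psi_k\Vert_\infty = O(k^{-1})$ and $T_f\colon H^0(L^k)\to H^0(L^k)$ denotes the Toeplitz-type operator defined by $\langle T_f s, t\rangle_{L^2} = \int_M f\,\langle s,t\rangle_{h^k}\,d\mu$. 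Orthonormality yields $T_1 = I$, and since $N_k + 1 = k^n V + O(k^{n-1})$ gives $k^{n-1}cV/(N_k+1) = c/k + O(k^{-2})$, subtracting the trace part produces
\[
 \underline{M}(\phi_k) = k^{-1}T_{g-c+\psi_k} + O(k^{-2})\,I.
\]

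The Hilbert-Schmidt norm of $T_f$ can be computed via the reproducing kernel $K(x,y) = \sum_j s_j(x)\overline{s_j(y)}$ as $\Vert T_f\Vert_{HS}^2 = \int\int|K(x,y)|^2 f(x)\overline{f(y)}\,d\mu(x)d\mu(y)$. The identity $\int|K(x,y)|^2 d\mu(y) = K(x,x) = B_k(x)$ combined with Cauchy-Schwarz gives $\Vert T_f\Vert_{HS}^2 \leqslant \int_M|f|^2 B_k\,d\mu = (1+O(k^{-1}))k^n\Vert f\Vert_{L^2}^2$, where the last step again uses \eqref{eq:Bergman} to write $B_k\,d\mu = (1+O(k^{-1}))k^n\omega^n/n!$. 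Applied to $f = g-c$ this yields the main term $k^{n/2-1}\Vert\Lambda_\omega\alpha - c\Vert_{L^2}$ up to an $O(k^{n/2-2})$ error; applied to $f = \psi_k$ (with $\Vert\psi_k\Vert_{L^2} = O(k^{-1})$) it gives $O(k^{n/2-2})$; and the residual term has $\Vert O(k^{-2})I\Vert_{HS} = O(k^{-2})\sqrt{N_k+1} = O(k^{n/2-2})$. The triangle inequality assembles these into the claimed bound.

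The main obstacle is the error bookkeeping: a naive pointwise estimate of the form $|M(\phi_k)_{jk} - k^{-1}\langle T_g s_j, s_k\rangle| = O(k^{-2})$ summed over the $O(k^{2n})$ matrix entries would produce a Hilbert-Schmidt error of order $k^{n-2}$, comparable to the main term. The crucial point is to package every error as a Toeplitz operator with an $L^\infty$-small symbol, and then invoke the favorable Cauchy-Schwarz/reproducing kernel bound $\Vert T_f\Vert_{HS} \leqslant k^{n/2}\Vert f\Vert_{L^2}(1 + O(k^{-1}))$, which keeps each error strictly below $k^{n/2-1}$.
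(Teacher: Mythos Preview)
Your argument is correct and follows essentially the same route as the paper: both use the embeddings given by orthonormal bases for the twisted inner product \eqref{eq:inner}, expand $\phi_k^*\omega_{FS}$ and $B_k^{-1}$ via Theorem~\ref{thm:Bergman}, and then apply Cauchy--Schwarz. The only cosmetic difference is that the paper first diagonalises $M(\phi_k)$ by a unitary change of basis and estimates each $\underline{M}_{ii}^2$ separately, whereas you keep the full matrix, recognise it as a Toeplitz operator $k^{-1}T_{g-c+\psi_k}$, and bound $\Vert T_f\Vert_{HS}$ through the reproducing-kernel identity $\int |K(x,y)|^2\,d\mu(y)=B_k(x)$; the two Cauchy--Schwarz steps are equivalent, and your packaging has the mild advantage that the error control you highlight at the end (symbols rather than entrywise bounds) is made explicit.
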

\begin{proof}
  We use the sequence of embeddings $\phi_k$ defined by orthonormal bases of
  $H^0(L^k)$, with respect to the inner product \eqref{eq:inner}. We
  have
  \[ \phi_k^*\omega_{FS} = k\omega + \ddb B_k = k\omega + O(1), \]
  and so
  \[ \begin{aligned}
         M(\phi_k)_{ij} &= \int_M
         \phi_k^*\left(\frac{Z^i\overline{Z}^j}{|Z|^2}\right)
         \alpha\wedge \frac{(\phi_k^*\omega_{FS})^{n-1}}{(n-1)!} \\
         &= \int_M \frac{\langle s_i, s_j\rangle_{h^k}}{B_k}
         \alpha\wedge \frac{ (k\omega)^{n-1}}{(n-1)!} + O(k^{-2}).
       \end{aligned} \]
   By changing the basis of sections, we can assume that $M$ is
   diagonal. We have
   \[\begin{aligned}
     M(\phi_k)_{ii} &= k^{n-1}\int_M \frac{|s_i|^2_{h^k}}{B_k} \alpha
   \wedge \frac{\omega^{n-1}}{(n-1)!} + O(k^{-2}) \\
     &= k^{n-1}\int_M |s_i|^2_{h^k} (\Lambda_\omega\alpha)\alpha
     \wedge \frac{\omega^{n-1}}{(n-1)!} + O(k^{-2}),
   \end{aligned}\]
   and also the dimension of $H^0(L^k)$, by Riemann-Roch, is
   \[ N_k + 1 = k^n\int_M \frac{\omega^n}{n!} + O(k^{n-1}). \]
   It follows that
   \[ \begin{aligned}\sum_{i=0}^{N_k} M(\phi_k)_{ii} &= k^{n-1}\int_M B_k
   (\Lambda_\omega\alpha) \alpha\wedge \frac{\omega^{n-1}}{(n-1)!} +
   O(k^{n-2}) \\
   &= k^{n-1}\int_M (\Lambda_\omega\alpha)\,\frac{\omega^n}{n!} +
   O(k^{n-2}). 
  \end{aligned} \]
  The trace free part of $M$ is therefore
  \[ \underline{M}(\phi_k)_{ii} = k^{n-1}\int_M
  |s_i|^2_{h^k}(\Lambda_\omega\alpha -
  c)\,\alpha\wedge\frac{\omega^{n-1}}{(n-1)!} + O(k^{-2}). \]
  It follows that
  \[ \begin{aligned}
     \underline{M}(\phi_k)_{ii}^2 &= k^{2n-2} \left( \int_M
       |s_i|^2_{h^k}(\Lambda_\omega\alpha) (\Lambda_\omega\alpha -
       c)\,\frac{\omega^n}{n!}\right)^2 + O(k^{-3}) \\
     &\leqslant k^{2n-2}\int_M
     |s_i|^2_{h^k}(\Lambda_\omega\alpha)(\Lambda_\omega\alpha -
     c)^2\,\frac{\omega^n}{n!} \int_M |s_i|^2_{h^k}
   \Lambda_\omega\alpha\,\frac{\omega^n}{n!} + O(k^{-3}) \\
     &= k^{-2}\int_M |s_i|^2_{h^k}
     (\Lambda_\omega\alpha)(\Lambda_\omega\alpha -
     c)^2\,\frac{(k\omega)^n}{n!} + O(k^{-3}),
   \end{aligned} \]
   and so finally summing up over $i$ and using \eqref{eq:Bergman}, we
   have
   \[ \Vert \underline{M}(\phi_k)\Vert^2 \leqslant k^{-2}\int_M
   (\Lambda_\omega\alpha - c)^2\,\frac{(k\omega)^n}{n!} + O(k^{n-3}), \]
   from which the result follows. 
\end{proof}
We can obtain lower bounds for $\Vert \underline{M}(\phi_k)\Vert$
using test-configurations. For this, suppose that
$\lambda:\mathbf{C}^*\hookrightarrow GL(N_k+1)$ is a one-parameter
subgroup, such that $\lambda(S^1)\subset U(N_k+1)$. So
$\lambda(t)=t^A$ for a Hermitian matrix $A$ with integer
eigenvalues. A Hamiltonian function for the corresponding $S^1$-action
is given by
\[ h_A = \frac{A_{jk}Z^j\overline{Z}^k}{|Z|^2}. \]
Let $\phi_k^t = \lambda(t)\circ \phi_k$, and define the function
\[ f(t) = \mathrm{Tr}(\underline{A}\underline{M}(\phi_k^t)) =
\mathrm{Tr}(\underline{A}M(\phi_k^t)), \]
where $\underline{A}$ is the trace-free part of $A$. Then
\[ \label{eq:ft}
f(t) = \int_M \phi_k^{t*}(h_A)\,\alpha\wedge\frac{
  (\phi_k^{t*}\omega_{FS})^{n-1}}{(n-1)!} -
\frac{\mathrm{Tr}(A)k^{n-1}}{N_k+1} \int_M \alpha\wedge\frac{
  \omega^{n-1}}{(n-1)!}, \]
and a calculation shows that for real numbers $t>0$ we
have $f'(t)\geqslant 0$:
\begin{lem}
  With the above definition we have $f'(t)\geqslant 0$.
\end{lem}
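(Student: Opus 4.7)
The plan is to pass to the logarithmic parameter $s = \log t$ and introduce the pulled-back data $H_s = \phi_k^{e^s*}h_A$ on $M$ together with $\omega_s = \phi_k^{e^s*}\omega_{FS}$. Since the additive constant in $f$ is independent of $s$, and $f'(t) = t^{-1}\frac{d}{ds}f(e^s)$, it suffices to check that
\[ \frac{d}{ds}\int_M H_s\,\alpha\wedge\frac{\omega_s^{n-1}}{(n-1)!} \geqslant 0. \]
Because $A$ is Hermitian, the flow $\sigma_s = e^{sA}$ on $\mathbf{P}^{N_k}$ is the gradient flow of the Hamiltonian $h_A$ with respect to the Fubini-Study metric, and a standard computation with Hamiltonian actions yields
\[ \dot\omega_s = \ddb H_s, \qquad \dot H_s = |\nabla h_A|^2_{\omega_{FS}}\circ\phi_k^{e^s}. \]

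Differentiating under the integral and integrating by parts against the closed form $\alpha\wedge\omega_s^{n-2}$ gives
\[ \frac{d}{ds}\int_M H_s\,\alpha\wedge\frac{\omega_s^{n-1}}{(n-1)!} \;=\; \int_M \dot H_s\,\alpha\wedge\frac{\omega_s^{n-1}}{(n-1)!} - \int_M \sqrt{-1}\partial H_s\wedge\bar\partial H_s\wedge\alpha\wedge\frac{\omega_s^{n-2}}{(n-2)!}. \]
I would then compare the two integrands pointwise after simultaneously diagonalizing the positive $(1,1)$-forms $\omega_s$ and $\alpha$ at each point, with eigenvalues $\lambda_j$ and $a_j$ respectively. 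Writing $H_j = \partial H_s/\partial z^j$ in the diagonalizing coordinates, a short wedge calculation shows that the off-diagonal pieces $H_j\bar H_k$ with $j\neq k$ in $\partial H_s\wedge\bar\partial H_s$ drop out precisely because $\omega_s$ and $\alpha$ share a diagonal basis, and the two integrands become
\[ \dot H_s\sum_k\frac{a_k}{\lambda_k}\,\frac{\omega_s^n}{n!} \qquad\text{and}\qquad \sum_j |H_j|^2\sum_{k\neq j}\frac{a_k}{\lambda_j\lambda_k}\,\frac{\omega_s^n}{n!}. \]

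The last ingredient is that $\phi_k^{e^s}\colon(M,\omega_s)\to(\mathbf{P}^{N_k},\omega_{FS})$ is by construction an isometric immersion, so decomposing $\nabla h_A$ at the image point into tangential and normal parts yields the pointwise bound
\[ \dot H_s \;=\; |\nabla h_A|^2_{\omega_{FS}} \;\geqslant\; |\nabla H_s|^2_{\omega_s} \;=\; \sum_j \frac{|H_j|^2}{\lambda_j}. \]
Inserting this bound into the difference of the two integrands, the cross terms $\sum_j|H_j|^2\sum_{k\neq j}a_k/(\lambda_j\lambda_k)$ cancel identically and the remainder is the manifestly non-negative $\sum_j|H_j|^2 a_j/\lambda_j^2$. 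Integrating over $M$ then concludes. The main obstacle I anticipate is the algebraic bookkeeping in the pointwise identity: one must verify that simultaneous diagonalization of $\omega_s$ and $\alpha$ really does kill the off-diagonal $(1,1)$-contributions from $\partial H_s\wedge\bar\partial H_s$, and then perform the index-by-index cancellation to see that exactly the right amount of the lower bound for $\dot H_s$ is consumed by the negative integral.
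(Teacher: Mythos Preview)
Your argument is correct and is essentially the paper's proof: both differentiate, integrate by parts to produce the $\sqrt{-1}\,\partial H_s\wedge\bar\partial H_s$ term, and then exploit the inequality $|\nabla h_A|^2_{\omega_{FS}}\geqslant|\nabla H_s|^2_{\omega_s}$ coming from the tangential/normal splitting of the ambient gradient. The only difference is presentational---the paper packages the linear algebra as the form identity $2\,\partial h_A\wedge\bar\partial h_A\wedge\alpha\wedge\omega_{FS}^{n-2}=\tfrac{2}{n-1}|\partial h_A|_M^2\,\alpha\wedge\omega_{FS}^{n-1}-\tfrac{2}{n(n-1)}|\partial h_A|^2_{M,\alpha}\,\omega_{FS}^{n}$ on the image $\phi_k(M)$, while you verify the same cancellation by simultaneous diagonalization on $M$; your residual $\sum_j|H_j|^2a_j/\lambda_j^2$ is exactly their $|\partial h_A|^2_{M,\alpha}$ in coordinates with $\lambda_j=1$, and the anticipated off-diagonal obstacle is harmless since $\alpha\wedge\omega_s^{n-2}$ is diagonal in your frame.
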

\begin{proof}
  We consider the one-parameter
group of diffeomorphisms generated by the vector field
$-{\mathrm{grad}}\,h_A$ so we are
approaching $0$ along the positive real axis in $\mathbb{C}^\ast$.
Then, we have the following derivative at $s=0$
\[ \label{compute:ft}\begin{aligned}
\left. \frac{d}{ds}\right |_{s=0}\int_{M}  \phi_k^{s*}(h_A)\,\alpha\wedge\frac{
  (\phi_k^{s*}\omega_{FS})^{n-1}}{(n-1)!}&=-\int_{\phi_k(M)} |{\mathrm{grad}}\,h_A|^2\,{\phi_k}_\ast(\alpha)\wedge\frac{
  \omega_{FS}^{n-1}}{(n-1)!}\\
  &+\int_{\phi_k(M)} h_A\,{\phi_k}_\ast(\alpha)\wedge\frac{
  \mathfrak{L}_{-{\mathrm{grad}}\,h_A}\omega_{FS}\wedge\omega_{FS}^{n-2}}{(n-2)!}.
\end{aligned}
\]
The second term in the r.h.s of (\ref{compute:ft}) can be written as
\[ \begin{aligned}
\int_{\phi_k(M)} h_A\,{\phi_k}_\ast(\alpha)\wedge
  \mathfrak{L}_{-{\mathrm{grad}}\,h_A}\omega_{FS}\wedge\omega_{FS}^{n-2}&=2  \int_{\phi_k(M)} \partial h_A\wedge \bar\partial h_A\wedge{\phi_k}_\ast(\alpha)
 \wedge\omega_{FS}^{n-2}\\
 &=\frac{2}{n-1}\int_{\phi_k(M)} |{}\partial h_A|_M^2\,{\phi_k}_\ast(\alpha)\wedge\omega_{FS}^{n-1}\\&-\frac{2}{n(n-1)}\int_{\phi_k(M)} |{}\partial h_A|_{M,\alpha}^2\,\omega_{FS}^{n},
\end{aligned}
\]
where $|{}\partial h_A|_M^2=\frac{1}{2}|{\mathrm{grad}}\,h_A|_M^2$ is the norm of the tangential part to $\phi_k(M)$ and $|{}\partial h_A|_{M,\alpha}^2$ is
the norm with respect to ${\phi_k}_\ast(\alpha).$ We obtain 
\[ \begin{aligned}
\left. \frac{d}{ds}\right |_{s=0}\int_{M}  \phi_k^{s*}(h_A)\,\alpha\wedge\frac{
  (\phi_k^{s*}\omega_{FS})^{n-1}}{(n-1)!}&=-\int_{\phi_k(M)} |{\mathrm{grad}}\,h_A|_N^2\,{\phi_k}_\ast(\alpha)\wedge\frac{
  \omega_{FS}^{n-1}}{(n-1)!}\\&-2\int_{\phi_k(M)} |{}\partial h_A|_{M,\alpha}^2\,\frac{\omega_{FS}^{n}}{n!},
\end{aligned}
\] 
where $|{\mathrm{grad}}\,h_A|_N^2$ is the norm of the normal component. Increasing $t$
corresponds to flowing along ${\mathrm{grad}}\,h_A.$ We deduce that $f'(t)\geqslant 0$
for real numbers $t>0$.
\end{proof}
 
Now it follows that
\[ \mathrm{Tr}(\underline{A}\underline{M}(\phi_k)) = f(1) \geqslant
\lim_{t\to 0} f(t), \]
and so by the Cauchy-Schwarz inequality
\[ \Vert\underline{A}\Vert\, \Vert\underline{M}(\phi_k)\Vert \geqslant
\lim_{t\to 0}f(t). \]
In particular if $\lim_{t\to 0} f(t) > 0$, then we get a positive lower bound
on $\Vert \underline{M}(\phi_k)\Vert$.
We need to compute the limit on the right hand side.
\begin{lem}
Suppose that $\alpha\in c_1(K)$ for a very ample line bundle $K$ over
$M$, and let $D\subset M$ be a sufficiently general element in the
linear series defined by $K$. Let $D_0 = \lim_{t\to 0} \phi_k^t(D)$
denote the flat limit, and $|D_0|$ the corresponding algebraic cycle. Then
\[ \lim_{t\to 0} \int_M \phi_k^{t*}(h_A)\,\alpha\wedge \frac{
  (\phi_k^{t*}\omega_{FS})^{n-1}}{(n-1)!} = \int_{|D_0|}
h_A\,\frac{\omega_{FS}^{n-1}}{(n-1)!}. \]
\end{lem}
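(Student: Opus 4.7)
The plan is to use the Poincar\'e--Lelong formula to replace the smooth form $\alpha$ by the integration current $[D]$, apply a change of variables along $\phi_k^t$, and pass to the limit via the flat convergence $\phi_k^t(D)\to|D_0|$.

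Fix a Hermitian metric $h$ on $K$ with Chern form $\alpha$, and let $s\in H^0(K)$ be a section vanishing exactly on $D$. Poincar\'e--Lelong gives $\alpha=[D]+\ddb\psi$ on $M$, with $\psi=-(2\pi)^{-1}\log|s|^2_h\in L^1(M)$, smooth off $D$ and with logarithmic singularities along $D$. Writing $\omega_t=\phi_k^{t*}\omega_{FS}$, the integral splits as $I(t)=J(t)+E(t)$, where
\[ J(t)=\int_D\phi_k^{t*}(h_A)\,\frac{\omega_t^{n-1}}{(n-1)!},\qquad E(t)=\int_M\phi_k^{t*}(h_A)\,\ddb\psi\wedge\frac{\omega_t^{n-1}}{(n-1)!}. \]

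For the main term, the change of variables along the embedding $\phi_k^t$ yields $J(t)=\int_{\phi_k^t(D)}h_A\,\omega_{FS}^{n-1}/(n-1)!$. Since the cycles $\phi_k^t(D)$ converge, by definition, to $|D_0|$ in the Chow variety, and $h_A\,\omega_{FS}^{n-1}/(n-1)!$ is a fixed smooth form on $\mathbf{P}^{N_k}$, the usual continuity of integration of smooth forms against a flat family of cycles gives $J(t)\to\int_{|D_0|}h_A\,\omega_{FS}^{n-1}/(n-1)!$ as $t\to 0$. For the error term, the key is the identity $\phi_k^{t*}(\ddb h_A)=\tfrac{t}{2}\partial_t\omega_t$ (up to a sign), which follows from $\mathcal{L}_{X_A}\omega_{FS}=2\ddb h_A$ on $\mathbf{P}^{N_k}$ --- where $X_A$ is the real vector field generating $\lambda(t)=t^A$ --- together with $\phi_k^t=\lambda(t)\circ\phi_k$. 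Integration by parts (valid since $\omega_t^{n-1}$ is closed and $\psi\in L^1$) then gives
\[ E(t)=\int_M\psi\,\phi_k^{t*}(\ddb h_A)\wedge\frac{\omega_t^{n-1}}{(n-1)!}=\frac{t}{2}\frac{d}{dt}\int_M\psi\,\frac{\omega_t^n}{n!}=\frac{t}{2}F'(t), \]
with $F(t)=\int_M\psi\,\omega_t^n/n!$. Thus $E(t)\to 0$ amounts to $tF'(t)\to 0$, i.e.\ the absence of a $\log t$ contribution in the expansion of $F(t)$ at $t=0$.

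The main obstacle is precisely this last step: the control of the asymptotic behavior of $F(t)$ at $t=0$. The genericity of $D$ enters here to guarantee that $D$ meets the critical locus of the degeneration $\phi_k^t$ transversally, so that the logarithmic singularities of $\psi$ along $D$ do not combine with the volume collapse of $\omega_t^n$ to produce a $\log t$ term. With this ruled out, $tF'(t)\to 0$, and combining with the convergence of $J(t)$ yields the lemma.
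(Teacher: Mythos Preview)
Your approach via Poincar\'e--Lelong for a single divisor is genuinely different from the paper's, and the difference matters. The paper does not write $\alpha = [D] + \ddb\psi$ for one fixed $D$; instead it proves (in the appendix, via a Radon--transform argument) that the smooth form $\alpha$ can be written as a signed average of integration currents over the whole linear system,
\[ \alpha = \int_{|K|} [D']\,d\mu(D'). \]
Substituting this turns the left-hand side into $\int_{|K|}\bigl(\int_{\phi_k^t(D')} h_A\,\omega_{FS}^{n-1}/(n-1)!\bigr)\,d\mu(D')$, with uniformly bounded integrands, so Lebesgue's theorem passes the limit inside. The only remaining point is that the limiting inner integral $\int_{|D'_0|} h_A\,\omega_{FS}^{n-1}/(n-1)!$ is a Chow weight and hence is constant for $D'$ outside a Zariski closed subset of $|K|$, which is $\mu$-null. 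No error term ever appears.

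Your argument, by contrast, manufactures the error term $E(t)$ and then must kill it. You correctly flag this as ``the main obstacle,'' but you do not actually overcome it: the sentence ``genericity of $D$ guarantees that $D$ meets the critical locus of the degeneration transversally, so that the logarithmic singularities of $\psi$ do not combine with the volume collapse of $\omega_t^n$ to produce a $\log t$ term'' is an assertion, not a proof. You have not defined the critical locus, and even granting some transversality, the passage from that geometry to the asymptotic estimate $tF'(t)\to 0$ is a nontrivial piece of analysis that is simply absent. In particular, $D$ is a divisor and will in general meet every positive-dimensional stratum of the degeneration; if the central fibre $M_0$ has nonreduced components, or if part of $D$ is carried into a lower-dimensional stratum, the interaction between the log pole of $\psi$ and the collapse of $\omega_t^n$ can be delicate. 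The paper's averaging device is designed precisely to bypass this analysis, and as it stands your argument has a gap exactly at its crucial step.
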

\begin{proof}
  According to Theorem~\ref{thm:averages} we can write $\alpha$ as a
  linear combination of currents of integrations:
  \[ \alpha = \int_{|K|} [D]\,d\mu(D), \]
  where $\mu$ is a smooth signed measure on the linear series
  $|K|$. It follows that
  \[ \begin{aligned}
    \int_M \phi_k^{t*}(h_A)\,\alpha\wedge \frac{
  (\phi_k^{t*}\omega_{FS})^{n-1}}{(n-1)!} &= \int_{|K|} \left( \int_D
  \phi_k^{t*}(h_A)\,
  \frac{(\phi_k^{t*}\omega_{FS})^{n-1}}{(n-1)!}\right)\,d\mu(D)\\
  &= \int_{|K|} \left(\int_{\phi_k^t(D)} h_A\,\frac{
      \omega_{FS}^{n-1}}{(n-1)!}\right)\,d\mu(D).
  \end{aligned}
  \]
  The integrands are uniformly bounded, so Lebesgue's convergence
  theorem implies that
  \[ \lim_{t\to 0} \int_M \phi_k^{t*}(h_A)\,\alpha\wedge \frac{
  (\phi_k^{t*}\omega_{FS})^{n-1}}{(n-1)!} = \int_{|K|} \left(
  \int_{|D_0|}
  h_A\,\frac{\omega_{FS}^{n-1}}{(n-1)!}\right)\,d\mu(D),\]
  where we abuse notation somewhat, denoting by $D_0=\lim_{t\to
    0}\phi^t_k(D)$ the flat limit, which depends on $D$. As in
  \cite{Sz12_1}, we now use the fact that the limit
  \[\label{eq:D0int} \int_{|D_0|} h_A\,\frac{\omega_{FS}^{n-1}}{(n-1)!} \]
  is related to the Chow weight of the divisor $D$ under the
  $\mathbf{C}^*$-action $\lambda$, to see that for all $D$ outside a
  Zariski closed subset of $|K|$, the value of the integral
  \eqref{eq:D0int} is the same. The result we want follows, since the
  Zariski closed subset has measure zero with respect to $\mu$.  
\end{proof}

Finally, just as in \cite{Don05} we can compute the asymptotics of
this lower bound on $\Vert\underline{M}(\phi_k)\Vert$ as
$k\to\infty$, when using the same test-configuration embedded into
larger and larger projective spaces.
This leads to the following definition.

\begin{defn}\label{defn:F}
 A test-configuration $\chi$ for $(M,L)$ consists of an embedding
 $M\subset \mathbf{P}^{N_k}$ of $M$ using
 sections of $L^k$, together with a one-parameter subgroup
 $\chi:\mathbf{C}^*\hookrightarrow GL(N_k+1)$. Let $M_0=\lim_{t\to 0}
 \chi(t)\cdot M$ denote the flat limit, with polarization $L_0^k$
 obtained by restricting the $O(1)$ bundle (so $L_0$ is a
 $\mathbf{Q}$-line bundle). There is an induced dual
 $\mathbf{C}^*$-action on $(M_0, L_0^m)$ for each sufficiently
 divisible $m$, and we let $d_m = \dim H^0(M_0, L_0^m)$, and denote by
 $w_m$ the total weight of the action. Define $a_0$ and $b_0$ using
 the expansions
 \[ \begin{aligned}
   d_m &= a_0m^n + O(m^{n-1}) \\
   w_m &=  b_0m^{n+1} + O(m^n)
\end{aligned}\]
As above, let $D\subset M$ be a sufficiently general element of the
linear series defined by $\alpha$ (we can replace $\alpha$ and
$\omega$ by multiples if necessary). The test-configuration $\chi$
induces a test-configuration for $D$, and we denote by $a_0'$ and
$b_0'$ the corresponding constants. Finally we define
\[ F_\alpha(\chi) = b_0' - \frac{a_0'}{a_0}b_0 = b_0' - cb_0, \]
where $c$ is the average of $\Lambda_\omega\alpha$ as above. 
The norm $\Vert\chi\Vert$ is defined exactly as in \cite{Don05} using
the asymptotics of $\mathrm{Tr}(A_m^2)$, where $A_m$ is the generator of
the $\mathbf{C}^*$-action on $H^0(M_0, L_0^m)$. 
\end{defn}

Note that from Proposition 3 in \cite{Don05}, it follows that if
$\lambda$ is a one-parameter subgroup of $GL(N_k+1)$ such that
$\lambda(S^1)\subset U(N_k+1)$ as before, then we have
\[ \int_{|D_0|} h_A\,\frac{\omega_{FS}^{n-1}}{(n-1)!} = -b_0' \]
in the notation of the above definition, and a similar equality holds
relating $b_0$ to the integral of $h_A$ over $|M_0|$ (note that the
eigenvalues of the dual action in the definition of $b_0$ are the
negatives of the eigenvalues of $\lambda$). It follows that
an alternative definition for $F_\alpha(\chi)$ is given by
\[ F_\alpha(\chi) = c\int_{|M_0|} h_A\,\frac{\omega_{FS}^n}{n!} -
\int_{|D_0|} h_A\,\frac{\omega_{FS}^{n-1}}{(n-1)!}, \]
where $|D_0|$ is the algebraic cycle representing the limit
$\lim_{t\to 0} \lambda(t)\cdot D$, for a generic divisor $D\subset M$
in the linear series defined by $\alpha$. Note that this is the same
expression that one needs to add to the usual Futaki invariant when
dealing with metrics with conical singularities along a divisor; see
Equation (30) in Donaldson~\cite{Don11_1}. 

From the arguments above, in an identical way to the proof in
\cite{Don05} we obtain the following.
\begin{thm}
We have
  \[ 
     \inf_{\omega\in c_1(L)}
     \Vert \Lambda_\omega\alpha - c\Vert_{L^2} \geqslant
  \sup_{\chi\text{ test-config}}-\frac{F_\alpha(\chi)}{\Vert \chi\Vert}, \]
  where the $L^2$-norm is computed using $\omega$.
\end{thm}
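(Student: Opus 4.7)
The plan is to assemble the pieces already established in the section into Donaldson's argument from \cite{Don05}. Fix a test-configuration $\chi$ for $(M,L)$, given by an embedding $M\hookrightarrow\mathbf{P}^{N_k}$ and a one-parameter subgroup with generator $A$. The two key inequalities I have in hand are: the upper bound
\[ \Vert \underline{M}(\phi_k)\Vert \leqslant k^{n/2-1}\Vert \Lambda_\omega\alpha - c\Vert_{L^2} + O(k^{n/2-2}) \]
for the embedding $\phi_k$ defined by an $L^2$-orthonormal basis, and the lower bound
\[ \Vert \underline{A}\Vert\,\Vert\underline{M}(\phi_k)\Vert \geqslant \mathrm{Tr}(\underline{A}\underline{M}(\phi_k)) = f(1) \geqslant \lim_{t\to 0} f(t) \]
coming from Cauchy-Schwarz and the monotonicity lemma for $f(t)$. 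Combining, I obtain
\[ \Vert \Lambda_\omega\alpha - c\Vert_{L^2} \geqslant \frac{\lim_{t\to 0} f(t)}{k^{n/2-1}\Vert\underline{A}\Vert} + O(k^{-1}). \]

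The next step is to replace $k$ by a sequence $km$ for $m\to\infty$, embedding $M$ via sections of $L^{km}$ and using the induced one-parameter subgroup $\chi_m$ on $H^0(M_0,L_0^{km})$ coming from the original $\chi$. By the previous lemma, $\lim_{t\to 0}f(t)$ for this embedding equals (up to the trace correction) the integral $\int_{|D_0|} h_{A_m}\,\omega_{FS}^{n-1}/(n-1)!$ minus the corresponding trace term involving $\int_M \alpha\wedge\omega^{n-1}/(n-1)!$. Using the identification in the remark following Definition~\ref{defn:F}, namely $\int_{|D_0|}h_{A_m}\,\omega_{FS}^{n-1}/(n-1)! = -b_0'(km)^{n+1}+O((km)^n)$ and the analogous identity for the correction term with $b_0$ replaced by $(\mathrm{Tr}(A_m)/(d_m)) \cdot d_m$, the leading $(km)^{n+1}$ coefficient of $\lim_{t\to 0} f(t)$ works out precisely to $-F_\alpha(\chi)$ (up to the conventional sign), matching the definition $F_\alpha(\chi) = b_0' - cb_0$.

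For the denominator, the definition of $\Vert\chi\Vert$ through the asymptotics of $\mathrm{Tr}(A_m^2)$ gives $\Vert\underline{A_m}\Vert \sim \Vert\chi\Vert\cdot m^{(n+2)/2}$ as $m\to\infty$, and the embedding dimension factor $(km)^{n/2-1}$ in the upper bound combines with $\Vert\underline{A_m}\Vert$ so that the total $m$-power on the right-hand side matches the $(km)^{n+1}$ growth of the numerator. Taking $m\to\infty$ for each fixed $\omega$ yields
\[ \Vert\Lambda_\omega\alpha - c\Vert_{L^2} \geqslant -\frac{F_\alpha(\chi)}{\Vert\chi\Vert}, \]
and then taking the infimum over $\omega\in c_1(L)$ on the left and the supremum over test-configurations on the right completes the proof.

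The main obstacle is the bookkeeping in the second step: one must carefully verify that the leading-order asymptotics in $m$ of $\lim_{t\to 0}f(t)$, including the trace-subtraction that converts $M$ into $\underline{M}$, reproduces exactly $-F_\alpha(\chi)\cdot(km)^{n+1}$, and that the normalization of $\Vert\chi\Vert$ is consistent with the normalization used in \cite{Don05}. Once these identifications are made, everything else is a direct transcription of Donaldson's argument.
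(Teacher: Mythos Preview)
Your proposal is correct and follows the same route as the paper, which gives no detailed argument at all beyond ``From the arguments above, in an identical way to the proof in \cite{Don05} we obtain the following.'' You have correctly identified that the proof consists of combining the upper bound on $\Vert\underline{M}(\phi_k)\Vert$, the monotonicity of $f$, Cauchy--Schwarz, the computation of $\lim_{t\to 0}f(t)$ in terms of integrals over $|D_0|$, and then extracting the leading $m$-asymptotics after re-embedding at level $km$ --- exactly Donaldson's template.

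One small bookkeeping correction: your stated growth $\int_{|D_0|}h_{A_m}\,\omega_{FS}^{n-1}/(n-1)! \sim -b_0'(km)^{n+1}$ is off by one power of $m$. Since $D$ is $(n-1)$-dimensional, the induced weight expansion is $w_m' = b_0'm^{n} + O(m^{n-1})$, so the integral is of order $m^n$, matching the $m^{n/2-1}\cdot m^{(n+2)/2}=m^n$ in the denominator. With this fix the powers do cancel and the limit is $-F_\alpha(\chi)/\Vert\chi\Vert$ as you claim; this is precisely the ``bookkeeping'' you flagged as the main thing to verify.
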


An immediate consequence of this result is that if a metric $\omega\in
c_1(L)$ exists for which $\Lambda_\omega\alpha=c$, then
$F_\alpha(\chi)\geqslant 0$ for all test-configurations $\chi$. We
now strengthen this to strict positivity, with a
perturbation method similar to Stoppa's work~\cite{Sto08} on
K-stability.
\begin{thm}
  If Equation~\eqref{eq:Jeq} has a solution, then
  $F_\alpha(\chi) > 0$ for all test-con\-fi\-gu\-ra\-tions $\chi$ for $(M,L)$ satisfying
  $\Vert\chi\Vert > 0$.
\end{thm}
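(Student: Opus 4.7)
By Theorem~\ref{thm:lower}, existence of a solution to Equation~\eqref{eq:Jeq} already yields the weak inequality $F_\alpha(\chi)\geqslant 0$ for every test-configuration $\chi$. The plan is thus to rule out the borderline case $F_\alpha(\chi)=0$ with $\Vert\chi\Vert>0$, following Stoppa's strategy~\cite{Sto08} from the cscK setting: argue by contradiction and construct a strictly destabilising test-configuration on a blow-up of $M$.

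Suppose $\chi$, with Hermitian generator $A$ acting on $\mathbf{P}^{N_k}$, is a hypothetical borderline destabiliser, and let $(M_0,L_0)$ be its flat limit. First I would pick a $\chi$-generic point $p\in M$ whose orbit closure under $\chi(\mathbf{C}^\ast)$ flows to a smooth point $p_0\in M_0$ lying off the generic divisor $D_0$ of $\alpha$, and situated inside a top-weight eigenspace of $A$. Blowing up the total space of $\chi$ along the section $t\mapsto \chi(t)\cdot p$ then produces a test-configuration $\widetilde\chi$ for $(\widetilde M,\,rL-\epsilon E)$, where $\widetilde M=\mathrm{Bl}_p M$, $E$ is the exceptional divisor, and $r,\epsilon\in\mathbf{Q}_{>0}$ are chosen so that the polarisation is ample. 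Next, applying the alternative formula for $F_\alpha$ immediately preceding the theorem to $|\widetilde M_0|$ and the proper transform of $|D_0|$, and expanding in $\epsilon$ by a Ross-Thomas~\cite{RT06} style calculation, I would obtain an expansion
\[
F_\alpha(\widetilde\chi) = \lambda(r)\,F_\alpha(\chi) - C\,\epsilon^{n-1}\bigl(h_A(p_0)-\overline{h}_A\bigr) + O(\epsilon^n),
\]
for a positive scaling factor $\lambda(r)$, an explicit positive constant $C$, and a weighted average $\overline{h}_A$ of $h_A$. Since $\Vert\chi\Vert>0$ the Hamiltonian $h_A$ is non-constant, and the choice of $p_0$ in a top-weight eigenspace makes the $\epsilon^{n-1}$ coefficient strictly negative; combined with the assumption $F_\alpha(\chi)=0$ this yields $F_\alpha(\widetilde\chi)<0$ for all sufficiently small $\epsilon$.

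The contradiction would then be closed by producing a solution of the J-equation on $\widetilde M$ in the class $r[\omega]-\epsilon[E]$, with the pulled-back $\alpha$, for all small $\epsilon$: applying Theorem~\ref{thm:lower} on $\widetilde M$ would force $F_\alpha(\widetilde\chi)\geqslant 0$, contradicting the previous step. This existence assertion on the blow-up is the main obstacle, and is an Arezzo-Pacard style gluing: start from a solution $\omega'\in[\omega]$ on $M$, graft in a scaled Burns-Simanca type model near the exceptional divisor, and correct to a true solution by the implicit function theorem. The analytic work to be done is setting up a Fredholm theory for the linearisation of $\omega\mapsto \Lambda_\omega\alpha$ on the asymptotically locally Euclidean model, proving uniform invertibility as the gluing parameter degenerates, and controlling the non-linear error; these steps should be substantially lighter than in the cscK setting because the J-operator is second order elliptic and scalar. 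Once this gluing is in place, Stoppa's combinatorial argument goes through verbatim and yields the strict inequality $F_\alpha(\chi)>0$ for every $\chi$ with $\Vert\chi\Vert>0$.
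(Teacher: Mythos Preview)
Your proposal follows Stoppa's blow-up strategy literally, but the paper takes a different and much shorter route. Rather than modifying the manifold, the authors exploit the fact that the J-equation involves \emph{two} K\"ahler classes: they perturb $\alpha$ to $\alpha_t=\alpha-t\omega$ for small $t>0$, observe that the implicit function theorem immediately produces solutions $\omega_t$ of $\Lambda_{\omega_t}\alpha_t=c_t$, and hence $F_{\alpha_t}(\chi)\geqslant 0$ by Theorem~\ref{thm:lower}. Linearity of $F_\alpha$ in $\alpha$ gives $F_{\alpha_t}(\chi)=F_\alpha(\chi)-tF_\omega(\chi)$, so it suffices to show $F_\omega(\chi)>0$, i.e.\ $b_0'>nb_0$ when the two classes coincide. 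This last inequality is proved by a filtration argument on the homogeneous coordinate ring of the central fibre: one computes how multiplication by a generic section $s\in H^0(M,L)$ shifts weights, and uses that $h_A$ is non-constant on $|M_0|$ (equivalent to $\Vert\chi\Vert>0$) to force strict inequality. No blow-up, no gluing, no change of manifold.

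Your route would run into a genuine obstacle beyond the Arezzo--Pacard step you flag. On $\widetilde M=\mathrm{Bl}_pM$ the pulled-back form $\pi^*\alpha$ is only semipositive: it degenerates to rank one along the exceptional divisor $E$. The J-equation $\Lambda_\omega(\pi^*\alpha)=c$ is therefore no longer uniformly elliptic near $E$, the inner product \eqref{eq:inner} degenerates, and Theorem~\ref{thm:lower} is not available as stated. You would need either to perturb to $\pi^*\alpha-\delta[E]$ and track two small parameters $(\epsilon,\delta)$ simultaneously, or to extend the whole Bergman-kernel framework to semipositive $\alpha$. Either way this is substantially more work than the perturbation-in-$\alpha$ argument, which sidesteps the issue entirely by staying on $M$. (Incidentally, since $F_\alpha$ involves only the leading coefficients $b_0,b_0'$ rather than the subleading ones appearing in the Donaldson--Futaki invariant, the exponent in your expansion should be $\epsilon^n$, not $\epsilon^{n-1}$; the qualitative conclusion would survive, but the computation is not quite as in \cite{Sto08}.)
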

\begin{proof}
  Suppose that there is a metric $\omega$ satisfying
  $\Lambda_\omega\alpha = c$, and let $\chi$ be a test-con\-fi\-gu\-ra\-tion
  for $(M,L)$. 
  For small $t \geqslant 0$ close to zero we write $\alpha_t = \alpha -
  t\omega$, which is still K\"ahler, and let
  \[ c_t = \frac{n\int_M \alpha_t\wedge \omega^{n-1}}{\int_M
    \omega^n}.\]
  A simple argument using the
  implicit function theorem (similar to that in the proof of
  Theorem~\ref{thm:torus1})
  implies that for $t$ sufficiently close to
  zero there is a metric $\omega_t$ satisfying
  $\Lambda_{\omega_t}\alpha_t=c_t$. The invariants $F_{\alpha_t}$ a
  priori are only defined for rational $t$, but they can be extended
  to all $t$ by continuity, and we have $F_{\alpha_t}(\chi) \geqslant
  0$ for $t$ sufficiently close to 0. In fact it is clear from the
  definition as the asymptotics of the limit of $f(t)$ in
  \eqref{eq:ft}, that $F_\alpha(\chi)$ is linear in $\alpha$, and so  
  \[ F_{\alpha_t}(\chi) = F_\alpha(\chi) - tF_\omega(\chi). \]
  We claim that $F_\omega(\chi) > 0$. If this is the case, then from
  $F_{\alpha_t}(\chi) \geqslant 0$ it follows that $F_\alpha(\chi) >
  0$. 

  For simplicity of notation
  assume that $\chi$ is given by a $\mathbf{C}^*$-action $\lambda$ on
  $\mathbf{P}^N$, and we have an embedding $M\subset \mathbf{P}^N$
  using sections of $H^0(M,L)$. Let $M_0$ denote the flat limit
  $\lim_{t\to 0} \lambda(t)\cdot M$. The assumption that
  $\Vert\chi\Vert > 0$ implies that the action of $\lambda$ is
  non-trivial on the reduced part of $M_0$, since the nilpotent
  structure gives rise to lower order terms in the expansion of
  $\mathrm{Tr}(A_k^2)$ in the definition of $\Vert \chi\Vert$ (see
  the discussion on p. 1406 of Stoppa~\cite{Sto08}). Alternatively,
  the norm is given by
  \[ \Vert\chi\Vert^2 = \int_{|M_0|} (h_A - \overline{h_A})^2\,
  \frac{\omega_{FS}^n}{n!}, \]
  where $\overline{h_A}$ denotes the average of $h_A$ on $|M_0|$, so
  the norm being positive implies that $h_A$ is non-constant on
  $|M_0|$.

  From the definitions it follows that to have $F_\omega(\chi) > 0$,
  we need
  \[ \int_{|M_0|} h_A\,\omega_{FS}^n > \int_{|D_0|} h_A\,
  \omega_{FS}^{n-1}, \]
  where $|D_0|$ denotes the limiting cycle of a generic divisor
  $D\subset M$ representing the class $[\omega]$ under the
  $\mathbf{C}^*$-action. In terms of Definition~\ref{defn:F} this
  means $b_0' > nb_0$. 

  In order to compute the induced
  test-configuration on $D$, it is useful to think of
  test-configurations as filtrations of the homogeneous coordinate
  ring as in \cite{Sz11} (see also \cite{Ny10}). Let
  \[ R = \bigoplus_{k\geqslant 0} H^0(M, L^k) = \bigoplus_{k\geqslant
    0} R_k \]
  be the homogeneous coordinate ring of $(M,L)$. As in \cite{Sz11},
  the test-configuration $\chi$ gives rise to a filtration
  \[ \mathbf{C} = F_0R \subset F_1R \subset F_2R \subset \ldots, \]
  where if necessary we multiply $\chi$ by an action with constant weights, to
  make all weights positive. Given a section $s\in R_1$, the divisor
  $D = (s=0)$ has homogeneous coordinate ring with $k^\mathrm{th}$
  graded piece $R_k \big / sR_{k-1}$, using the inclusion $sR_{k-1}\subset
  R_k$.  The filtration of $R$ induces a
  filtration on the coordinate ring of $D$ by letting
  \[ F_i\left( R_k \big / sR_{k-1}\right) = \left.\raisebox{.2em}{
      $F_i R_k$}\middle/\raisebox{-.2em}{$F_i(sR_{k-1})$}\right. \]
  The weight of the action on $R_k$ is given by
  \[ w_k = \sum_i (-i)\dim\bigslant{F_iR_k}{F_{i-1}R_k} , \]
  with a corresponding formula for the weight on
  $\bigslant{R_k}{sR_{k-1}}$:
  \[ \label{eq:wkprime}
  \begin{aligned} w_k' &= \sum_i (-i)\dim \bigslant{F_i(R_k\big /
      sR_{k-1})}{ F_{i-1}(R_k \big / sR_{k-1})} \\
    &= w_k - \sum_i (-i)\dim
    \bigslant{F_i(sR_{k-1})}{F_{i-1}(sR_{k-1})}.
    \end{aligned} \]
  In order to estimate the last sum, we will consider the central
  fiber of the test-configuration. The homogeneous coordinate ring of
  the central fiber $(M_0,L_0)$ can naturally be thought of as the
  associated graded ring of the filtration:
  \[ \widetilde{R} = \sum_i \bigslant{F_iR}{F_{i-1}R} = \sum_{k,i}
  \bigslant {F_iR_k}{F_{i-1}R_k}, \]
  with the induced $\mathbf{C}^*$-action acting on the $i^\mathrm{th}$
  piece with weight $-i$.  Let us write
  \[ s = s_1 + \ldots + s_m \]
  for the weight decomposition of the section $s$.

  We have
  \[ H^0(M_0, L_0^k) = \bigslant{\bigoplus\limits_j
    H^0(M_{0,j},L_0^k)}{E_k}, \]
  where $M_{0,j}$ are the irreducible components of
  $M_0$, and $E_k$ is a suitable subspace of the direct sum, defined
  by equality of sections on various intersections. Since these
  intersections are lower dimensional, $\dim E_k = O(k^{n-1})$, so to
  leading order in $k$, we can treat $H^0(M_0, L_0^k)$ as if it were
  equal to the direct sum. This way we can focus on each irreducible
  component separately. We need to check how multiplication
  by $s$ affects the weight of sections, and this will depend on
  which irreducible components the product does not vanish on.
  For each irreducible component $M_{0,j}$, let
  $m_j$ be the largest integer such that $s_{m_j}$ is
  not a zero divisor when restricted to $M_{0,j}$ (i.e. the reduced
  support $|M_{0,j}|$ is not contained in the zeroset of
  $s_{m_j}$). Then multiplication by $s$ will decrease the weight of
  sections which do not vanish on $M_{0,j}$ by at least $m_j$, and
  the
  total contribution of this is  $ (-m_j)\dim H^0(M_{0,j}, L_0^{k-1})$.
  Summing up over all irreducible components,
  some of the sections will be counted more than once, but up to order
  $k^{n-1}$  we get an upper bound
  \[ \label{eq:weights} \begin{aligned} \sum_i (-i) &\dim
  \bigslant{F_i(sR_{k-1})}{F_{i-1}(sR_{k-1})} \leqslant w_{k-1} +\\ &\qquad+ \sum_j
  (-m_j)\dim H^0(M_{0,j}, L_0^{k-1})+ O(k^{n-1}) \\
  &= w_{k-1} - \sum_j m_jk^n\int_{|M_{0,j}|}\frac{\omega_{FS}^n}{n!} + O(k^{n-1}).
  \end{aligned}\]
  If $s$ is a generic section, then it has non-zero component in each
  weight space for the $\mathbf{C}^*$-action $\lambda$, so for
  each $j$ we have that $m_j$ is the largest weight of the induced
  $\mathbf{C}^*$-action on the reduced support $|M_{0,j}|$. It follows
  that
  \[ \label{eq:M0j}
  \int_{|M_{0,j}|} h_A\,\frac{\omega_{FS}^n}{n!} \leqslant
  m_j\int_{|M_{0,j}|} \frac{\omega_{FS}^n}{n!}, \]
  and there is at least one $j$ such that $h_A$ is not
  constant on $|M_{0,j}|$, so that we have strict inequality in
  \eqref{eq:M0j}. Using this in \eqref{eq:weights} we get
  \[\sum_i (-i) \dim
  \bigslant{F_i(sR_{k-1})}{F_{i-1}(sR_{k-1})} < w_{k-1} -
  k^n\int_{|M_0|} h_A\,\frac{\omega_{FS}^n}{n!} + O(k^{n-1}). \]
  From \eqref{eq:wkprime} we then get
  \[ \begin{aligned} w_k' &> w_k - w_{k-1} + k^n\int_{|M_0|} h_A\,
    \frac{\omega_{FS}^n}{n!} + O(k^{n-1}) \\
    &= (n+1)b_0k^n -b_0k^n + O(k^{n-1}), 
    \end{aligned} \]
  so by taking leading order terms we have $b_0' > nb_0$. 
\end{proof}

\section{Deformation to the normal cone}\label{sec:slope}
A special type of test-configuration is given by deformation to the
normal cone of a subvariety. This was studied in detail by
Ross-Thomas~\cite{RT06} in the context of K-stability. Let $(M,L)$ be
a polarized manifold as before, and let $V\subset M$ be a
subvariety. The deformation to the normal cone of $V$ is the flat
family
\[ \mathrm{Bl}_{V\times\{0\}} M\times\mathbf{C} \]
over $\mathbf{C}$, obtained by blowing up. For sufficiently small
rational $\kappa > 0$ one can define the $\mathbf{Q}$-polarization
$\mathcal{L}_\kappa = \pi^*L - \kappa E$, where $E$ denotes the exceptional
divisor. Let us denote this test-configuration by $\chi_{V,\kappa}$. We can
compute the invariant $F_\alpha(\chi_{V,\kappa})$ using calculations from
\cite{RT06}, and from this we obtain the following.
\begin{prop}
  For sufficiently small $\kappa$ we have
  \[ F_\alpha(\chi_{V,\kappa}) = \frac{\kappa^{n-p+1}}{p!(n-p+1)!}
  \int_V c\omega^p - p\omega^{p-1}\wedge\alpha. \]
\end{prop}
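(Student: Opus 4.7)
The plan is to compute the leading coefficients $b_0$ and $b_0'$ of Definition~\ref{defn:F} directly via intersection theory on the total space of the deformation to the normal cone, following the approach of Ross-Thomas~\cite{RT06}.

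First, I compactify the total space of $\chi_{V,\kappa}$ to $\overline{\mathcal{X}}=\mathrm{Bl}_{V\times\{0\}}(M\times\mathbf{P}^1)$, with $\mathbf{Q}$-polarization $\mathcal{L}_\kappa=\pi^*L-\kappa E$, where $E$ is the exceptional divisor and $\pi\colon\overline{\mathcal{X}}\to M$ is the natural projection. Equivariant Riemann-Roch, applied to the map $\overline{\mathcal{X}}\to\mathbf{P}^1$ together with the weight decomposition of the direct image on $\mathbf{P}^1$, expresses $b_0$, up to conventional sign, as $\mathcal{L}_\kappa^{n+1}/(n+1)!$. I then expand $(\pi^*L-\kappa E)^{n+1}$ as a polynomial in $\kappa$, using that $(\pi^*L)^{n+1}=0$ for dimensional reasons. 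For each $k\geqslant 1$ the intersection $(\pi^*L)^{n+1-k}\cdot E^k$ is computed by restriction to $E\cong\mathbf{P}(N_{V/M}\oplus\mathcal{O}_V)$: with $p\colon E\to V$ the bundle projection and $\zeta=c_1(\mathcal{O}_E(1))$, one has $i^*E=-\zeta$ and $i^*\pi^*L=p^*(L|_V)$, and the projection formula together with $p_*(\zeta^j)=s_{j-(n-p)}(N_{V/M}\oplus\mathcal{O})$ reduces this to $\int_V s_{k-n+p-1}(N_{V/M})\cdot L^{n+1-k}$ up to sign. This Segre class vanishes for $k\leqslant n-p$, so the leading order in $\kappa$ comes from $k=n-p+1$, where $s_0=1$ produces $\int_V\omega^p$, yielding
\[ b_0 = \frac{\kappa^{n-p+1}}{p!\,(n-p+1)!}\int_V\omega^p + O(\kappa^{n-p+2}). \]

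For $b_0'$, after possibly passing to a multiple I may assume $\alpha\in c_1(K)$ for a very ample line bundle $K$, and take $D$ a sufficiently general element of $|K|$. Then $V\not\subset D$, so $V\cap D\subset D$ is a subvariety of dimension $p-1$ in the $(n-1)$-dimensional $D$, and the induced test-configuration on $D$ is precisely the deformation to the normal cone of $V\cap D$ in $D$ with restricted polarization. Applying the same computation to $(D,V\cap D)$ with $n-1$ and $p-1$ in place of $n$ and $p$, and using the cycle identity $[V\cap D]=\alpha|_V$ in $V$, I obtain
\[ b_0' = \frac{\kappa^{n-p+1}}{(p-1)!\,(n-p+1)!}\int_V\omega^{p-1}\wedge\alpha + O(\kappa^{n-p+2}). \]
Substituting into $F_\alpha(\chi_{V,\kappa})=b_0'-cb_0$ and factoring out $\kappa^{n-p+1}/(p!(n-p+1)!)$ yields the desired formula, understood as the leading-order-in-$\kappa$ statement that controls the sign of $F_\alpha$ for small $\kappa$.

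The main obstacle will be careful bookkeeping of sign conventions: the dual $\mathbf{C}^*$-action used to define $b_0$, the Grothendieck convention for $\mathbf{P}(N_{V/M}\oplus\mathcal{O})$, and the sign in $N_{E/\overline{\mathcal{X}}}=\mathcal{O}_E(-1)$ must all be kept consistent. The only other non-formal ingredient is the identification of the induced test-configuration on a generic $D\in|K|$ with the deformation to the normal cone of $V\cap D$ in $D$, which follows from the universal property of blow-ups applied to the proper transform of $D\times\mathbf{P}^1$ in $\overline{\mathcal{X}}$.
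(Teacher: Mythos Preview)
Your approach is essentially the same as the paper's: both compute $b_0$ and $b_0'$ via Ross--Thomas, and both identify the induced test-configuration on a generic $D\in|K|$ as deformation to the normal cone of $V\cap D$ in $D$. The only difference is cosmetic: the paper uses the Ross--Thomas slope formula $b_0=\int_0^\kappa a_0(x)\,dx-\kappa a_0$ with $a_0(x)=a_0-\frac{x^{n-p}}{(n-p)!}\int_V\frac{\omega^p}{p!}$, while you compute $\mathcal{L}_\kappa^{n+1}/(n+1)!$ directly on the compactified total space. These are two equivalent packagings of the same Riemann--Roch computation.

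There is, however, a concrete sign error in your displayed formulas. The paper obtains
\[
b_0=-\frac{\kappa^{n-p+1}}{p!\,(n-p+1)!}\int_V\omega^p,\qquad
b_0'=-\frac{\kappa^{n-p+1}}{(p-1)!\,(n-p+1)!}\int_V\omega^{p-1}\wedge\alpha,
\]
whereas yours have the opposite sign. If one substitutes your stated values into $F_\alpha=b_0'-cb_0$, one gets $\frac{\kappa^{n-p+1}}{p!(n-p+1)!}\int_V\big(p\,\omega^{p-1}\wedge\alpha-c\,\omega^p\big)$, the negative of the desired expression. You correctly identified sign bookkeeping as ``the main obstacle'' but then did not track it: you wrote ``up to conventional sign'' for $b_0\sim\mathcal{L}_\kappa^{n+1}/(n+1)!$ and again ``up to sign'' in the Segre-class reduction, yet committed to explicit signs at the end that are inconsistent with the conclusion. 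In the paper's convention (dual action, so weights are negated), the exceptional divisor contributes with a minus sign; fixing this single sign makes your argument go through.
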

\begin{proof}
  We need to compute the numbers $a_0, a_0', b_0, b_0'$ in
  Definition~\ref{defn:F}. First we have
  \[ \begin{aligned} a_0 &= \int_M \frac{\omega^n}{n!} \\
      a_0' &= \int_D \frac{\omega^{n-1}}{(n-1)!} = \int_M
      \alpha\,\wedge\frac{\omega^{n-1}}{(n-1)!},
    \end{aligned}\]
  where $D$ is any element of the linear series defined by $\alpha$. 
  To compute $b_0$, we use formula (4.6) from \cite{RT06}:
  \[ b_0 = \int_0^\kappa a_0(x)\,dx - \kappa a_0. \]
  For this we need to compute $a_0(x)$, defined by the expansion
  \[ \dim H^0(M, L^k\otimes \mathcal{I}_V^{xk}) = a_0(x)k^n + O(k^{n-1}), \]
  where $\mathcal{I}_V$ denotes the ideal sheaf of $V$. It follows
  that for small $x$ we have
  \[ a_0(x) = a_0 - \frac{x^{n-p}}{(n-p)!}\int_V
  \frac{\omega^p}{p!},\]
  where $p = \dim V$,  and so
  \[ b_0 = -\frac{\kappa^{n-p+1}}{(n-p+1)!}\int_V
  \frac{\omega^p}{p!}. \]
  To compute $b_0'$, note that for a generic $D$, the subvariety $V$
  has no component contained in $D$ and so the induced
  test-configuration for $D$ is deformation to the normal cone of
  $D\cap V$. The above formula can then be used in this case too and
  we obtain
  \[ b_0' = -\frac{\kappa^{n-p+1}}{(n-p+1)!}\int_{D\cap V}
  \frac{\omega^{p-1}}{(p-1)!} = -\frac{\kappa^{n-p+1}}{(n-p+1)!}\int_V
  \alpha\wedge\frac{\omega^{p-1}}{(p-1)!}. \]
  Our result then follows from the definition of
  $F_\alpha(\chi_{V,\kappa})$ in Definition~\ref{defn:F}. 
\end{proof}

Together with Theorem~\ref{thm:stab}, this result implies one
direction of Conjecture~\ref{conj:main}. Note that this direction also
follows directly by examining the eigenvalues of the metrics $\omega$
and $\alpha$ along $V$. Indeed, denoting by $\alpha_V$ and $\omega_V$
the restrictions to $V$, we have
\[ p\omega_V^{p-1}\wedge \alpha_V =
(\Lambda_{\omega_V}\alpha_V)\omega_V^p,\]
along $V$, and also $\Lambda_{\omega_V}\alpha_V < \Lambda_\omega\alpha
= c$. It follows that along $V$ we have
\[ c\omega_V^p - p\omega_V^{p-1}\wedge\alpha_V > 0, \]
and integrating we get \eqref{eq:stab}.

We should point out that part
of the content of Conjecture~\ref{conj:main} is that deformation to
the normal cone of subvarieties provides a sufficiently large family
of test-configurations to check, to ensure that a solution to
Equation~\ref{eq:Jeq} exists. We will show this in the case when $M$
is two-dimensional, however, it may be necessary to refine
the conjecture allowing for more general test-configurations in the
higher dimensional case. 

\begin{prop}
  When $\dim M = 2$, then Conjecture~\ref{conj:main} holds. 
\end{prop}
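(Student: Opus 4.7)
The plan is to reduce the statement to a Nakai-Moishezon-type criterion for Kähler classes on compact complex surfaces. In dimension $n=2$ the Song-Weinkove existence condition recalled in the introduction asks for a Kähler metric $\omega'\in[\omega]$ satisfying $c\omega'-\alpha>0$ as a $(1,1)$-form. I claim this is equivalent to the cohomology class $c[\omega]-[\alpha]$ being Kähler: the forward direction is immediate, and conversely, if $\beta$ is any Kähler form representing $c[\omega]-[\alpha]$, then $\omega':=c^{-1}(\beta+\alpha)$ lies in $[\omega]$ and is Kähler as a positive combination of two Kähler forms, while $c\omega'-\alpha=\beta>0$ by construction. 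So it suffices to show that $c[\omega]-[\alpha]$ is Kähler.

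For this I would invoke the Buchdahl-Lamari criterion, which asserts that a real $(1,1)$-class on a compact Kähler surface is Kähler provided it has positive self-intersection, pairs positively with some Kähler class, and pairs positively with every irreducible curve. The curve condition is precisely the $p=1$ case of the hypothesis \eqref{eq:stab}. Using the normalization $2\int_M\alpha\wedge\omega = c\int_M\omega^2$ that defines $c$, one checks
\[ \int_M (c\omega-\alpha)^2 = c^2\int_M\omega^2 - 2c\int_M\alpha\wedge\omega + \int_M\alpha^2 = \int_M\alpha^2 > 0, \]
and
\[ \int_M (c\omega-\alpha)\wedge\omega = c\int_M\omega^2 - \int_M\alpha\wedge\omega = \frac{c}{2}\int_M\omega^2 > 0, \]
the latter supplying the positive pairing with the Kähler class $[\omega]$. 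All three hypotheses are met, so the criterion furnishes a Kähler representative of $c[\omega]-[\alpha]$, and by the reduction above together with the Song-Weinkove theorem \cite{SW04} a solution to \eqref{eq:Jeq} exists.

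The argument is essentially a packaging of Buchdahl-Lamari with Song-Weinkove, so I do not anticipate any serious obstacle in dimension 2; the only thing to notice is that the self-intersection and mixed-intersection positivity conditions follow automatically from the definition of $c$ and from $\alpha$ and $\omega$ being Kähler, so they require no additional hypothesis beyond \eqref{eq:stab}. The reason this approach does not generalize directly is that in higher dimensions the Song-Weinkove existence criterion involves positivity of an $(n-1,n-1)$-form rather than a $(1,1)$-form, for which there is no available analog of the Buchdahl-Lamari criterion characterizing the positive cone via intersections with subvarieties.
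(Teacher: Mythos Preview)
Your proof is correct and arrives at the same reduction as the paper --- namely, that it suffices to show $c[\omega]-[\alpha]$ is a K\"ahler class --- but the execution differs. The paper argues by contradiction via a continuity path: setting $\alpha_t=\alpha+t\omega$ and observing that $[c_t\omega-\alpha_t]$ is K\"ahler for large $t$, one finds a threshold $T$ at which the class is big and nef but not K\"ahler, and then Demailly--Paun~\cite{DP04} produces a curve $V$ with $\int_V(c_T\omega-\alpha_T)=0$, contradicting the hypothesis at $t=0$. You instead apply the Buchdahl--Lamari criterion directly, checking the self-intersection and the pairing with $[\omega]$ by hand using the normalization of $c$. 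Both routes rest on the same circle of ideas (the Nakai--Moishezon characterization of the K\"ahler cone on surfaces), but yours is the more economical packaging: the continuity argument in the paper is effectively rederiving the surface criterion in situ, whereas you invoke it outright. Your explicit verification that the Song--Weinkove condition in dimension two is \emph{equivalent} to $c[\omega]-[\alpha]$ being K\"ahler is also a nice touch; the paper simply cites Chen~\cite{Chen04} for this step.
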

\begin{proof}
  We only need to show that if
  \[\label{eq:assumpt} \int_V c\omega - \alpha > 0 \]
  for all curves in $M$, then there exists a metric $\omega$
  satisfying $\Lambda_\omega\alpha = c$. According to 
  Chen~\cite{Chen04} it is enough to show that $[c\omega -
  \alpha]$ is a K\"ahler class.

  We will argue by contradiction, assuming that $[c\omega - \alpha]$
  is not K\"ahler with an argument similar to one in \cite{Don99}. 
  For any $t > 0$ let us define $\alpha_t = \alpha +
  t\omega$, and let
  \[ c_t = \frac{\int_M \alpha_t \wedge \omega}{\int_M
    \frac{1}{2}\omega^2}. \]
  Then
  \[ \label{eq:ddtct}
  \frac{d}{dt} (c_t\omega -\alpha_t) = 2\omega - \omega = \omega,\]
  so for sufficiently large $t$ the class $[c_t\omega - \alpha_t]$ is
  K\"ahler. Let
  \[ T = \inf\{t\,:\, [c_t\omega -\alpha_t]\text{ is K\"ahler}\}. \]
  Then $[c_T\omega - \alpha_T]$ is not K\"ahler, but it is
  nef, and in addition also big,
  since we can compute that
  \[ [c_T\omega - \alpha_T]^2 = [\alpha_T]^2 > 0. \]
  From the main result of Demailly-Paun~\cite{DP04} it follows that
  there is a curve $V\subset M$ such that
  \[ \int_V c_T\omega -\alpha_T = 0. \]
  But then we must also have
  \[ \int_V c\omega - \alpha \leqslant 0, \]
  because of \eqref{eq:ddtct}. This contradicts our assumption
  \eqref{eq:assumpt}.  
\end{proof}

\section{Example - $\mathbf{P}^3$ blown up in one point}
\label{sec:ruled}
In this section we will follow Fang-Lai~\cite{FL12} in studying a
concrete example, namely the blowup $M=\mathrm{Bl}_p\mathbf{P}^3$. The
discussion will be somewhat similar to the second author's
work~\cite{GSz07_1}, on the Calabi flow on a ruled surface. We write
$M=\mathbf{P}(\mathcal{O}(-1)\oplus\mathcal{O})$ as a ruled manifold
over $\mathbf{P}^2$. Let $h$ be a metric on $O(-1)$ with curvature
$-2\pi i\omega_{FS}$, and write $s = \log|\cdot|_h$ for the log of the
fiberwise norm. Given a suitably convex function
$f:\mathbf{R}\to\mathbf{R}$ we can write down a K\"ahler metric
\[ \alpha = \ddb f(s) \]
on $M$. At a point choose local coordinates $z=(z_1,z_2)$ on
$\mathbf{P}^2$ and a fiberwise coordinate $w$ such that
$d\log h(z)=0$. At this point we have
\[ \alpha = \sqrt{-1} f'(s) \pi^*\omega_{FS} + f''(s)
\frac{ \sqrt{-1} dw\wedge d\overline{w}}{2|w|^2}.\]
Similarly we can write
\[ \omega = \ddb g(s) = \sqrt{-1} g'(s) \pi^*\omega_{FS} + g''(s)
\frac{ \sqrt{-1} dw\wedge d\overline{w}}{2|w|^2}\]
for a different convex function $g$. It follows that
\[ \Lambda_\omega\alpha = 2\frac{f'(s)}{g'(s)} +
\frac{f''(s)}{g''(s)}.\]
Let us write $E_0$ for the zero section
(the exceptional divisor) on $M$ and $E_\infty$
the infinity section. Following \cite{FL12} we will work in the
K\"ahler classes
\[ \begin{aligned}
    \alpha &\in a[E_\infty] - [E_0] \\
    \omega &\in b[E_\infty] - [E_0],
  \end{aligned}
\]
for constants $a,b > 1$. In terms of $f, g$ this means that
\[ \begin{aligned}
  \lim_{s\to -\infty} f'(s) = 1,\quad \lim_{s\to \infty} f'(s) = a \\
  \lim_{s\to -\infty} g'(s) = 1, \quad\lim_{s\to \infty} g'(s) = b. 
\end{aligned}
\]
We introduce the coordinate $\tau = g'(s)$, and define the strictly
increasing function $F:[1,b] \to [1,a]$ by letting
\[ F(g'(s)) = f'(s) \]
for all $s$. We can then compute that in terms of $F$ we have
\[ \frac{dF}{d\tau}  + 2\frac{F}{\tau}, \]
and moreover if we think of $\alpha$ as being fixed, then we can
recover $\omega$ from knowing $F$. The main result of \cite{FL12} in
this special case is
that the J-flow on $M$ displays three different behaviors depending on
the values of $a,b$:
\begin{enumerate}
\item If $\displaystyle{\frac{ab^2-1}{b^3-1} > \frac{2}{3}}$, then the J-flow
  converges to a smooth solution of $\Lambda_\omega\alpha = c$.
\item If $\displaystyle{\frac{ab^2-1}{b^3-1} = \frac{2}{3}}$, then the J-flow
  converges to a singular solution of $\Lambda_\omega\alpha = c$,
  which is smooth away from $E_0$, and has a conical singularity along
  $E_0$.
\item
  If $\displaystyle{\frac{ab^2-1}{b^3-1} < \frac{2}{3}}$, then the J-flow converges
  to a current, which is a smooth solution of $\Lambda_\omega\alpha =
  c'$ (with a suitable constant $c'$) away from $E_0$, and is a current of
  integration along $E_0$. 
\end{enumerate}

In particular the equation
$\Lambda_\omega\alpha=c$ has a smooth solution on $M$, if and only if
\[ \label{eq:abineq}
\frac{ab^2-1}{b^3-1} > \frac{2}{3}.
\]
This is
consistent with Conjecture~\ref{conj:main}. Indeed we have
\[ \label{eq:Mc}
c = \frac{ 3 (a[E_\infty] - [E_0]) \cdot (b[E_\infty] - [E_0])^2}{
  (b[E_\infty] - [E_0])^3} = \frac{3(ab^2 - 1)}{b^3-1},\]
and so
\[ \int_{E_0} c\omega^2 - 2\omega\wedge\alpha = c - 2 = \frac{3(ab^2 -
  1)}{b^3-1} -2. \]
The latter quantity is positive precisely when the Inequality~\eqref{eq:abineq}
holds. In addition the fact that in case (2) and (3) the singularities
occur along $E_0$ is reflected by the fact that it is deformation to
the normal cone of $E_0$ which is the destabilizing test-configuration
in these cases. 

\begin{rem} Donaldson~\cite{Don99} pointed out that the obvious
  conjecture to make is that the J-flow converges whenever the class
  $[c\omega - \alpha]$ is K\"ahler. It is easy to check that on $M$,
  if we set $a = 5$ and $b=10$, then the latter class is K\"ahler, but
  Inequality~\ref{eq:abineq} does not hold. This means that the
  obvious conjecture is false.
\end{rem}

We will now use Theorem~\ref{thm:lower} to show that in these cases
the J-flow minimizes the $L^2$-norm of $(\Lambda_\omega\alpha-c)$. The
only interesting case is (3), since in the other two cases the infimum
is zero. As a consequence we will also see that equality holds in
Theorem~\ref{thm:lower}. In order to work with algebraic K\"ahler
metrics we need to assume that $a,b$ are rational, but a simple
approximation argument extends the results to arbitrary $a,b > 1$. 

\begin{thm}
  For any $a,b > 1$, the J-flow minimizes the $L^2$-norm of
  $\Lambda_\omega\alpha$. In addition equality holds in
  Equation~\eqref{eq:lower}. 
\end{thm}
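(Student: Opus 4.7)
The plan is to handle the trichotomy of Fang-Lai separately. In cases (1) and (2) the J-flow converges (smoothly, or weakly to a conically singular solution) to a solution of $\Lambda_\omega\alpha = c$, so $\|\Lambda_\omega\alpha - c\|_{L^2}$ tends to zero, which is of course the infimum; the test-configuration lower bound in Theorem~\ref{thm:lower} is trivially zero as well, and both claims of the theorem hold. The interesting case is (3), where $\tfrac{ab^2-1}{b^3-1} < \tfrac{2}{3}$, and this is where the work lies.

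First I would compute the limiting value of $\|\Lambda_\omega\alpha - c\|_{L^2}$ along the J-flow. In the $(S^1)^2$-invariant setting the J-flow reduces to a parabolic ODE for the profile $F:[1,b]\to[1,a]$, with $\Lambda_\omega\alpha = F'(\tau) + 2F(\tau)/\tau$. By Fang-Lai the limit profile $F_\infty$ is defined only on a subinterval $[\tau_\ast,b]$ with $\tau_\ast\in(1,b)$, satisfies $F_\infty'(\tau) + 2F_\infty(\tau)/\tau = c'$ for some constant $c'>c$, and has boundary values $F_\infty(\tau_\ast) = 1$, $F_\infty(b) = a$; the failure of $F_\infty$ to extend to $[1,\tau_\ast]$ is exactly the current of integration along $E_0$. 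The parameters $\tau_\ast$ and $c'$ are fixed by the K\"ahler class together with the matching condition at $\tau_\ast$, and the limiting $L^2$-norm can then be written as an explicit function of $\tau_\ast$ by integrating $(F'+2F/\tau - c)^2$ against the measure $\tau^2\,d\tau$ coming from $\omega^3$ in the $(S^1)^2$-invariant setting.

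Next I would produce a matching test-configuration. The natural candidate is $\chi_{E_0,\kappa}$, deformation to the normal cone of $E_0$, which is precisely the mechanism producing the singular behaviour of the J-flow in case (3). I would compute $F_\alpha(\chi_{E_0,\kappa})$ and $\|\chi_{E_0,\kappa}\|$ for $\kappa$ in the full Seshadri range where $\pi^\ast L - \kappa E$ is nef, using the alternative formula for $F_\alpha$ from Definition~\ref{defn:F} together with Riemann-Roch on the blowup of $M\times\mathbf{C}$ along $E_0\times\{0\}$; this extends the small-$\kappa$ formula of Section~\ref{sec:slope} to the entire nef range. I would then maximize $-F_\alpha(\chi_{E_0,\kappa})/\|\chi_{E_0,\kappa}\|$ over $\kappa$. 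The optimal $\kappa^\ast$ should correspond exactly to the jump $\tau_\ast - 1$ from the Fang-Lai analysis, and the optimum value should equal the limiting $L^2$-norm computed above. Combined with Theorem~\ref{thm:lower} this gives equality in \eqref{eq:lower} and shows that the J-flow achieves the infimum, proving both statements. A density argument handles irrational $a,b$.

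The main obstacle will be identifying the optimal $\kappa^\ast$ with the Fang-Lai jump $\tau_\ast$. Both are first-order optimality conditions encoding the same moment-map balance, but one is phrased as an ODE matching condition for $F_\infty$ at $\tau_\ast$, and the other as the Euler-Lagrange equation for a rational function of $\kappa$ built from Riemann-Roch data; bridging them requires an explicit polynomial identity. Once this identification is made the two computations of Step~1 and Step~2 fit together and the theorem follows, in the same way as in the second author's treatment of the Calabi functional on a ruled surface in \cite{GSz07_1}.
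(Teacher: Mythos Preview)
Your overall strategy is right in spirit, and the reduction to case (3) together with the density argument for irrational $a,b$ is fine. The gap is in the choice of destabilizing test-configurations: restricting to $\chi_{E_0,\kappa}$ and optimizing over the single parameter $\kappa$ will not saturate the inequality in Theorem~\ref{thm:lower}. The reason is that equality in the Cauchy--Schwarz step behind Theorem~\ref{thm:lower} forces the test-configuration (in the limit) to be ``proportional'' to $\Lambda_\omega\alpha - c$ at the optimal $\omega$. In the Calabi ansatz this means the convex function on $[1,b]$ encoding the test-configuration must be proportional to
\[
h(\tau)=\frac{dF_\infty}{d\tau}+2\frac{F_\infty}{\tau}-c
=\begin{cases} 2\tau^{-1}-c, & 1\le\tau\le\lambda,\\ 2\lambda^{-1}-c, & \lambda\le\tau\le b,\end{cases}
\]
where $\lambda$ is your $\tau_\ast$. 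This $h$ is \emph{strictly} convex on $[1,\lambda]$, whereas a single deformation to the normal cone of the reduced divisor $E_0$ corresponds to a piecewise linear convex function with one break point. So no choice of $\kappa$ will give you $h$, and the optimized ratio $-F_\alpha(\chi_{E_0,\kappa})/\Vert\chi_{E_0,\kappa}\Vert$ will fall strictly short of the limiting $L^2$-norm.

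What the paper does instead is use the bundle/toric family of test-configurations associated to \emph{arbitrary} rational piecewise linear convex functions on $[1,b]$ (as in Donaldson~\cite{Don02} and \cite{GSz07_1}); those that are constant near $\tau=b$ are deformations to the normal cone of non-reduced schemes supported on $E_0$. One then takes a sequence $h_k$ of such functions approximating $h$, computes $b_{0,k},b_{0,k}',\Vert\chi_k\Vert$ as integrals against $h_k$, and checks directly via an integration by parts (using that $h'$ vanishes where $F_\infty\neq 1$) that $-\lim_k F_\alpha(\chi_k)=\int_1^b h^2\,\tfrac{1}{2}\tau^2\,d\tau=\lim_k\Vert\chi_k\Vert^2$. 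The identification you were worried about then becomes automatic: there is no separate optimality condition to match, because the destabilizer is built from $F_\infty$ itself.
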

\begin{proof}
  We will only focus on case (3). In \cite{FL12}, the J-flow is
  rewritten in terms of the function $F$, resulting in an evolution
  equation for a time dependent family of functions $F_t : [1,b] \to
  [1,a]$. It is then shown in \cite{FL12}, that as $t\to\infty$, the
  functions $F_t$ converge to $F_\infty$ satisfying
  \[ F_\infty(\tau) = \begin{cases}
    1,\quad & 1 \leqslant \tau \leqslant \lambda \\
    G(\tau),\quad & \lambda\leqslant\tau \leqslant b,
  \end{cases} \]
  for a suitable constant $\lambda \in (1, b)$, and $G$ satisfies the
  ODE
  \[ \frac{d}{d\tau}\left[\frac{dG}{d\tau}  +
    2\frac{G}{\tau}\right]=0, \]
  with boundary conditions $G(\lambda) = 1$, $G(b) = a$. This function
  $G$ is strictly increasing.

  The average of $\Lambda_\omega\alpha$ is the fixed constant $c$ in
  Equation~\ref{eq:Mc}, 
  which we can also compute from the function $F$. Namely, the
  volume form of $\omega$ is $\frac{1}{2}\tau^2\,d\tau$, so
  \[ \int_M \frac{\omega^3}{3!} = \int_1^b
  \frac{1}{2}\tau^2\,d\tau = \frac{b^3-1}{6}, \]
  and
  \[ \begin{aligned} \int_M \Lambda_\omega\alpha\, \frac{\omega^3}{3!} &= \int_1^b
  \left[ \frac{dF}{d\tau} + 2\frac{F}{\tau}\right]
  \frac{1}{2}\tau^2\,d\tau \\ &= \frac{1}{2} \int_1^b
  \frac{d}{d\tau}(\tau^2 F)\,d\tau \\ &= \frac{ ab^2 - 1}{2}, 
  \end{aligned} \]
  so the ratio of the two quantities recovers Equation~\eqref{eq:Mc}. 
  It is therefore equivalent to minimize $\Vert
  \Lambda_\omega\alpha\Vert_{L^2}$ or $\Vert\Lambda_\omega\alpha -
  c\Vert_{L^2}$. Using the work in \cite{FL12}, along the J-flow
  $\omega(t)$ we have
  \[ \label{eq:limit} \lim_{t\to\infty}
       \int_M (\Lambda_{\omega(t)}\alpha - c)^2\, \frac{\omega^3}{3!}
      = \int_1^b \left[ \frac{dF_\infty}{d\tau} +
        2\frac{F_\infty}{\tau} - c\right]^2
      \frac{1}{2}\tau^2\,d\tau. \]
  Our task is therefore to show that this limit is also a lower bound
  for the $L^2$-norm, using Theorem~\ref{thm:lower}. 

  Just as in \cite{GSz07_1}, we can write down test-configurations
  using piecewise linear, rational, convex functions on $[1,b]$. These
  are bundle versions of the toric test-configurations for the
  $\mathbf{P}^1$ fibers, studied by Donaldson~\cite{Don02}. We will
  use piecewise linear approximations to the convex function
  \[ h(\tau) = \frac{dF_\infty}{d\tau} + 2\frac{F_\infty}{\tau} - c
  = \begin{cases}
    2\tau^{-1} - c,\quad &1\leqslant\tau\leqslant\lambda \\
    2\lambda^{-1} - c, &\lambda\leqslant\tau\leqslant b. 
  \end{cases}
  \]

  We choose a sequence of piecewise linear, rational, convex functions
  $h_k$, approximating $h$. We can assume that each $h_k$ is constant
  for $\tau$ close to $b$. This means that each $h_k$ is a deformation
  to the normal cone of a suitable scheme supported on $E_0$. Let us denote this
  test-configuration by $\chi_k$.
  A general element in the linear series corresponding to $\alpha$ has
  no component contained in $E_0$. Let us write $F = \pi^*(O(1))$, so
  that
  \[ a[E_\infty] - [E_0] = [F] + (a - 1)[E_\infty]. \]
  In the limit along the central fiber of $\chi_k$, a
  generic element of the linear series corresponding to $\alpha$ will
  be the same as the induced test-configuration for a generic element
  of $[F]$ plus $(a-1)$-times $[E_\infty]$. Using this,  
  we can compute the numbers $b_{0,k}, b_{0,k}'$ in Definition~\ref{defn:F}
  for the invariant $F_\alpha(h_k)$, and we get
  \[ \begin{aligned}
    b_{0,k} &= -\int_1^b h_k\,\frac{1}{2}\tau^2\,d\tau \\
    b_{0,k}' &= -\int_1^b h_k \tau\, d\tau -
    \frac{(a-1)b^2}{2}h_k(b) \\
    \Vert\chi_k\Vert^2 &= \int_1^b h_k^2\,\frac{1}{2}\tau^2\,d\tau. 
  \end{aligned}\]
  To compute the right hand side of \eqref{eq:limit} we have
  \[ \begin{aligned}
    \int_1^b &\left[ \frac{d F_\infty}{d\tau} + 2\frac{F_\infty}{\tau}
    -c \right]^2\,\frac{1}{2}\tau^2\, d\tau = 
  \frac{1}{2}\int_1^b h \left[\tau^2\frac{dF_\infty}{d\tau} + 2\tau
    F_\infty -c\tau^2\right]\,d\tau\\
   &= \frac{1}{2}\int_1^b
  h\frac{d}{d\tau}( \tau^2F_\infty)\,d\tau - c\int_1^b
  h\frac{1}{2}\tau^2\,d\tau \\
  &= -\int_1^b \frac{dh}{d\tau} F_\infty\frac{1}{2}\tau^2\,d\tau +
  \frac{1}{2}\Big[b^2ah(b) - h(1)\Big] - c\int_1^b
  h\frac{1}{2}\tau^2\,d\tau \\
  &= -\int_1^b \frac{dh}{d\tau} \frac{1}{2}\tau^2\,d\tau +
  \frac{1}{2}\Big[b^2ah(b) - h(1)\Big] - c\int_1^b
  h\frac{1}{2}\tau^2\,d\tau \\
  &= \int_1^b h \tau\,d\tau + \frac{(a-1)b^2}{2}h(b) - c\int_1^b
  h\frac{1}{2}\tau^2\,d\tau \\
  &= \lim_{k\to\infty} \left[\int_1^b h_k\tau\,d\tau +
    \frac{(a-1)b^2}{2}h_k(b) -c\int_1^b
  h_k\frac{1}{2}\tau^2\,d\tau\right]  \\
  &= -\lim_{k\to\infty } F_\alpha(\chi_k),  
  \end{aligned}
  \]
  where in the fourth line we used that $h'(\tau)=0$ wherever
  $F_\infty(\tau) \not=1$.
  From Theorem~\ref{thm:lower} we obtain for any $\omega$ in our
  K\"ahler class the lower bound
  \[ \begin{aligned}
  \Vert \Lambda_\omega\alpha - c\Vert_{L^2} &\geqslant
  -\lim_{k\to\infty} \frac{F_\alpha(\chi_k)}{\Vert \chi_k\Vert} \\
  &= \frac{\int_1^b h^2\,\frac{1}{2}\tau^2\,d\tau}{\left(\int_1^b
      h^2\frac{1}{2}\tau^2\,d\tau\right)^{1/2}} \\
  &= \left(\int_1^b \left[ \frac{dF_\infty}{d\tau} +
      2\frac{F_\infty}{\tau} -
      c\right]^2\,\frac{1}{2}\tau^2\,d\tau\right)^{1/2} \\
  &= \lim_{t\to\infty} \Vert \Lambda_{\omega(t)}\alpha -
  c\Vert_{L^2},
  \end{aligned}
  \]
  where we used Equation~\ref{eq:limit} in the last line. This
  establishes that the J-flow minimizes the $L^2$-norm of
  $\Lambda_\omega\alpha$ as well as the fact that equality holds on
  Theorem~\ref{thm:lower} on the manifold $M$. 
\end{proof}

\section{Example - complex tori}\label{sec:torus}
In this section we will study the J-flow, or rather its critical
equation, on a  complex torus $M=\mathbf{C}^n /
(\mathbf{Z}^n+i\mathbf{Z}^n)$.
It is
easy to generalize to quotients by other lattices, so for simplicity
of notation we will focus on this specific case. The equation can then
be reduced to a special case of the following, in which $a_{jk}$ is
the Hessian of a function. It turns out that the stability condition
in this case is vacuous, which can also be seen from the fact that
``constant'' metrics in any two K\"ahler classes always provide 
solutions of the J-equation. 

Let $a_{jk}(x)$ be a smooth, symmetric positive definite matrix valued
function on $\mathbf{R}^n$, which is $\mathbf{Z}^n$-periodic. In
addition let $B = (b_{jk})$ be a symmetric positive definite
matrix. In terms of the J-equation, $a_{jk}(x)$ is determined by the
metric $\alpha$, and $B$ determines the K\"ahler class of $\omega$. 
\begin{thm}\label{thm:torus1}
  There exists a smooth convex function $f:\mathbf{R}^n\to\mathbf{R}$
  of the form
  \[ f(x) = \frac{1}{2}x^TBx + u(x), \]
  with $u(x)$ being $\mathbf{Z}^n$-periodic, that satisfies the
  equation
  \[ \label{eq:eq1} \sum_{j,k} a_{jk}(x) f^{jk}(x) = c.\]
  Here $f^{jk}$ is the inverse of the Hessian of $f$, and $c$ is a
  constant. The solution $f$ is unique up to addition of a constant. 
\end{thm}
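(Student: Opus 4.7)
My approach is a continuity method in the coefficients $a_{jk}$, pushed through by transforming the equation via the Legendre transform and invoking the constant rank theorem to preserve strict convexity along the way.

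First, interpolate between the easy and hard problems. Set $a_{jk}^{(s)}(x)=(1-s)\delta_{jk}+s\,a_{jk}(x)$ for $s\in[0,1]$, and let $c_s$ be the constant forced by the corresponding equation. At $s=0$ the quadratic $f_0(x)=\tfrac{1}{2}x^T B x$ solves $\sum\delta_{jk}f_0^{jk}=\mathrm{tr}(B^{-1})$ and has the required form. Let $S\subset[0,1]$ be the set of parameters for which there exists a smooth strictly convex solution $f_s=\tfrac{1}{2}x^T B x+u_s$ with $u_s$ periodic, normalized by $\int_{[0,1]^n} u_s\,dx=0$.

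Openness of $S$ is the implicit function theorem. Linearizing $\sum a_{jk}^{(s)}(x)f^{jk}(x)$ at a solution $f$ gives the elliptic operator $v\mapsto -a_{jk}^{(s)}(x)f^{jp}f^{kq}v_{pq}$ acting on periodic functions; its kernel, modulo constants, is trivial by the strong maximum principle, and its cokernel is absorbed by the free parameter $c_s$.

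Closedness requires uniform $C^{2,\alpha}$ estimates together with a uniform lower bound on $D^2 f_s$; maintaining the latter is the main obstacle, since without it the equation loses ellipticity and the inverse Hessian blows up. To attack this, pass to the Legendre transform $f^*(y)=\sup_x(x\cdot y-f(x))$. Using the identity $f^{jk}(x)=(f^*)_{jk}(y)$ at $y=\nabla f(x)$, the equation transforms to the quasilinear PDE
\[
 \sum_{j,k} a_{jk}\bigl(\nabla f^*(y)\bigr)(f^*)_{jk}(y)=c,
\]
which is \emph{linear} in the second derivatives of $f^*$, with coefficients depending only on $\nabla f^*$. The transformed potential takes the form $f^*(y)=\tfrac{1}{2}y^T B^{-1}y+v(y)$ with $v$ periodic with respect to the dual lattice $B\mathbb{Z}^n$, bringing the analysis back onto a compact fundamental domain. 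In this form, standard maximum principle arguments on derivatives of $f^*$ furnish $C^0$, $C^1$ and $C^2$ estimates, after which Krylov--Safonov and Schauder theory upgrade to full $C^{2,\alpha}$ and higher regularity.

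The remaining step is strict convexity of $f^*$, equivalently an upper Hessian bound on $f$. Here the key input is the constant rank theorems of Korevaar--Lewis~\cite{KL87} and Bian--Guan~\cite{BG09}, whose structural hypotheses are satisfied by the transformed equation above: any $C^2$ convex solution $f^*$ has $D^2 f^*$ of constant rank on the torus. Since at $s=0$ we have $D^2 f_0^*=B^{-1}>0$, and since rank can only drop in a limit, a continuity argument in $s\in S$ forces $D^2 f_s^*>0$ everywhere, and hence $f_s$ remains uniformly convex. This closes the continuity method and produces a smooth convex solution at $s=1$. Uniqueness up to an additive constant then follows from the strong maximum principle applied to the linearization of the equation at a solution.
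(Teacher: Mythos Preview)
Your overall architecture matches the paper's proof almost exactly: continuity method in the coefficients, openness via the linearization and the maximum principle, Legendre transform to convert the inverse-Hessian equation into the quasilinear equation $\sum a_{jk}(\nabla f^*)\,f^*_{jk}=c$, easy upper Hessian bound on $f^*$ from that equation plus convexity, Schauder bootstrap, and finally the constant rank theorem of Korevaar--Lewis/Bian--Guan for the lower Hessian bound. Uniqueness is also handled the same way.

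There is, however, a genuine gap in your last step. You write that ``since at $s=0$ we have $D^2 f_0^*=B^{-1}>0$, and since rank can only drop in a limit, a continuity argument in $s\in S$ forces $D^2 f_s^*>0$ everywhere.'' This does not go through. For each $s\in S$ the Hessian of $f_s^*$ already has full rank by definition of $S$, so the constant rank theorem applied along the open path tells you nothing new. The issue is precisely at the boundary: when $s_i\to s_\infty$, your $C^{2,\alpha}$ estimates give a convex limit $f_{s_\infty}^*$ solving the equation, and constant rank says its Hessian has some fixed rank $k\le n$ everywhere --- but lower semicontinuity of rank (which you yourself invoke) allows $k<n$. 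Nothing in a ``continuity in $s$'' argument rules this out.

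What actually excludes $k<n$, and what the paper uses, is the structure $f^*(y)=\tfrac12 y^T B^{-1} y+v(y)$ with $v$ periodic for the dual lattice $B\mathbf{Z}^n$. If the Hessian has constant rank $k<n$, the Bian--Guan theorem gives a fixed null direction, so $f^*$ is affine along a line in $\mathbf{R}^n$; but the quadratic part $\tfrac12 y^TB^{-1}y$ is strictly convex in every direction and $v$ is bounded, a contradiction. You should replace the vague continuity-in-$s$ sentence with this structural contradiction; once you do, your argument coincides with the paper's.
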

\begin{proof}
  We argue using the continuity method, connecting $a_{jk}(x)$ to a
  constant matrix. When $a_{jk}(x)$ is constant, then a solution is
  given by $u(x)=0$. To prove openness is standard using the
  implicit function theorem. The linearization of the operator in
  \eqref{eq:eq1} at a solution $f$ is given by
\[ L:u \mapsto \sum_{j,k,p,q} a_{pq}(x) f^{jp}(x) f^{qk}(x)
\frac{\partial^2u(x)}{\partial x^j\partial x^k}, \]
 defined on periodic functions $u$. 
 This can be thought of as an elliptic operator on the
 torus. Deforming it into the Laplacian for the flat metric, we see
 that it has index zero, and moreover the strong maximum principle
 implies that any element of the kernel is constant. So the image of
 $L$ has codimension $1$. Moreover examining the maximum and miminum
 point of $u$ we see that non-zero constants are not in the image. It
 follows that
 \[ \begin{aligned}
          C^{k+2,\alpha} \times \mathbf{R} &\to C^{k,\alpha} \\
               (u, c) &\mapsto L(u) - c
\end{aligned}\]
is surjective. This is sufficient for openness.

The uniqueness also follows from the strong maximum principle. Namely,
suppose that $f$ and $h$ satisfy
\[ \sum_{j,k} a_{jk}(x)f^{jk}(x) = c_f, \quad \sum_{j,k}
a_{jk}(x)h^{jk}(x) = c_h. \]
Then, writing $f_t = h + t(f-h)$ for $t\in [0,1]$, 
\[ \begin{aligned}
  c_f - c_h &= \sum_{j,k} a_{jk}(x) (f^{jk}(x) - h^{jk}(x)) \\
  &= \sum_{j,k} a_{jk}(x) \int_0^1 \frac{d}{dt} f_t^{jk}(x)\,dt \\
  &= \sum_{j,k} a_{jk}(x) \int_0^1 \sum_{p,q} f_t^{jp}(x) (f-h)_{pq}(x)
  f_t^{qk}(x)\, dt \\
  &= \sum_{p,q} \left( \int_0^1 \sum_{j,k}
    f_t^{jp}(x)a_{jk}(x)f_t^{qk}(x)\,dt\right)
  \frac{\partial^2(f-h)}{\partial x^p\partial x^q}.
\end{aligned}\]
The coefficients in the brackets define a positive definite symmetric
matrix. Examining the maximum and minimum point of the periodic
function $f-h$ we find that
$c_f-c_h=0$, and then the strong maximum principle implies that $f-h$
is a constant. 

  It remains to prove a priori estimates for the solution. First of all
  it is easy to obtain a priori lower and upper bounds $0 < \underline{c}
  < c < \overline{c}$, in terms of the smallest and largest
  eigenvalues of $a_{jk}(x)$ and $B$ by examining the maximum and
  minimum points of $u$. Consider now the Legendre
  transform $g:\mathbf{R}^n \to\mathbf{R}$ of $f$, defined by
  \[ f(x) + g(y) = x\cdot y, \]
  where $y = \nabla f(x)$. It is standard that then $x=\nabla g(y)$,
  and
  \[ \left(\frac{\partial^2 f}{\partial x^j\partial
      x^k}\right)^{-1}(x) = \left(\frac{\partial^2 g}{\partial
      y^j\partial y^k}\right)(y). \]
  It follows that $g$ satisfies the equation
  \[ \label{eq:g} \sum_{j,k} a_{jk}(\nabla g(y)) \frac{\partial^2 g}{\partial
    y^j\partial y^k} = c. \]
  In addition $g$ is convex, so this equation implies a uniform upper
  bound on the Hessian of $g$ (in terms of the lowest eigenvalue of
  $a_{jk}(x)$). In particular we have a uniform $C^\alpha$ bound on
  $\nabla g$, and so Equation~\eqref{eq:g} implies uniform
  $C^{2,\alpha}$ bounds on $g$. Bootstrapping, we obtain uniform
  $C^{k,\alpha}$ bounds on $g$ for all $k$. The only question that
  remains is to find a positive lower bound on the Hessian of $g$,
  since that will then imply the required estimates on the Legendre
  transform $f$.

  We argue by contradiction. Suppose that there is a sequence of
  solutions $f_i$, with Legendre transforms $g_i$, such that the $g_i$
  do not have a uniform lower bound on their Hessians. The
  $C^{k,\alpha}$ bounds imply that we can find a subsequence
  converging in $C^\infty$ to a convex function $g_\infty$,
  solving an equation of the form
  \[\label{eq:ginf} \sum_{j,k} a^\infty_{jk}(\nabla g_\infty(y))
  \frac{\partial^2 g_\infty }{\partial
    y^j\partial y^k} = c_\infty > 0,  \]
  but with the Hessian of $g_\infty$ having a zero eigenvalue at some
  point. The constant rank result Corollary 1.3 in Bian-Guan~\cite{BG09} (see also
  Korevaar-Lewis~\cite{KL87} for the two-dimensional case) implies
  that the Hessian of $g_\infty$ is degenerate everywhere, and in particular
  there is a line $L\subset\mathbf{R}^n$, along which $g_\infty$ is
  linear. 
  This contradicts the fact that each $g_k$ (and so also $g_\infty$) is of the form
  \[ g_k(y) = \frac{1}{2}y^TB^{-1}y + v(y), \]
  where $v$ is periodic, with period $B\cdot\mathbf{Z}^n$. This
  implies that the solution $g$ of \eqref{eq:g} has a uniform lower
  bound on its Hessian (depending on bounds on $a_{jk}(x)$ and
  $B$). 
\end{proof}

\section{The inverse $\sigma_k$-flow}\label{sec:sigmak}
The discussion in the previous sections can be extended to a class of more
general equations introduced in Fang-Lai-Ma~\cite{FLM11},
called the inverse $\sigma_k$-flow. We are interested in the elliptic
version of the equation, which for $k=1,\ldots, n$ can be written as 
\[ \label{eq:sigmak}
\binom{n}{k}\alpha^k \wedge \omega^{n-k} = c \omega^n,
\]
where as before $\alpha$ is a fixed K\"ahler metric on the
$n$-dimensional K\"ahler manifold $M$, and we are trying to solve for
$\omega$ in a fixed K\"ahler class. In analogy with the result of
Song-Weinkove~\cite{SW04}, it is shown in \cite{FLM11} that a
necessary and sufficient condition for 
Equation~\eqref{eq:sigmak} to have a solution is the following:
\begin{quotation}
  There is a metric $\omega'\in [\omega]$ such that
  \[ c\omega'^{n-1} - \binom{n-1}{k}\omega'^{n-k-1}\wedge \alpha^k > 0,\]
  in the sense of positivity of $(n-1,n-1)$-forms.
\end{quotation}
In analogy with Conjecture~\ref{conj:main} one can make the following
conjecture.
\begin{conj}\label{conj:2}
  A solution of Equation~\ref{eq:sigmak} exists if and only if for all
  subvarieties $V\subset M$ of dimension $p$, for $p=k, k+1,\ldots,
  n-1$, we have
  \[ \int_V c\omega^p - \binom{p}{k}\omega^{p-k}\wedge\alpha^k > 0. \]
\end{conj}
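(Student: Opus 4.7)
The plan is to lift each step of the $J$-flow analysis in Sections~\ref{sec:Bergman}--\ref{sec:slope} to the $\sigma_k$ setting. Writing $\lambda_1, \ldots, \lambda_n$ for the eigenvalues of $\alpha$ with respect to $\omega$ at a point, Equation~\eqref{eq:sigmak} becomes the pointwise identity $\sigma_k(\lambda)=c$, the natural nonlinear generalization of $\Lambda_\omega\alpha=c$, and keeps every calculation transparent.

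For the easy direction I would first give a direct pointwise argument parallel to the one following Theorem~\ref{thm:stab}. Given $V\subset M$ of dimension $p\geqslant k$ and a smooth point, write $A$ for the Hermitian endomorphism representing $\alpha$ with respect to $\omega$; the identity $\sigma_k(A)=\mathrm{tr}(\wedge^k A)$ together with the strict positivity of $\wedge^k A$ (which follows from that of $A$) implies that the trace of $\wedge^k A$ on the proper subspace $\wedge^k T_xV\subset\wedge^k T_xM$ is strictly smaller than $\sigma_k(A)=c$. Since this restricted trace equals $\binom{p}{k}\alpha_V^k\wedge\omega_V^{p-k}/\omega_V^p$, we obtain the pointwise inequality $c\omega_V^p - \binom{p}{k}\omega_V^{p-k}\wedge\alpha_V^k > 0$, which upon integration yields the condition of Conjecture~\ref{conj:2}.

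To fit the stability framework, I would replace the weight $\alpha\wedge\omega^{n-1}/(n-1)!$ in the inner product \eqref{eq:inner} by $\binom{n}{k}\alpha^k\wedge\omega^{n-k}/n!$. The Ma--Marinescu expansion then produces a Bergman kernel with leading term $1/\sigma_k(\lambda)$, and the entire argument of Section~\ref{sec:Bergman} goes through with $\Lambda_\omega\alpha$ replaced everywhere by the pointwise $\sigma_k$-expression. The only structural change is in Definition~\ref{defn:F}: since the relevant limit of $f(t)$ now couples $\alpha^k$ to the Fubini--Study form, one represents $\alpha$ as a current of integration $k$ separate times and takes $k$ generic divisors in the linear series of $\alpha$, extracting $b_0'$ from the induced test-configuration on their codimension-$k$ intersection. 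The Ross--Thomas computation of Section~\ref{sec:slope} for deformation to the normal cone of $V$ then outputs $F_\alpha(\chi_{V,\kappa})$ proportional to $\int_V c\omega^p - \binom{p}{k}\omega^{p-k}\wedge\alpha^k$, and the analogue of Theorem~\ref{thm:stab} reproves necessity from stability.

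The main obstacle, just as for Conjecture~\ref{conj:main}, is the sufficient direction. In dimension two the statement is either vacuous ($k=2$) or the already-established $J$-flow case ($k=1$), and for $k=n$ in any dimension it reduces to Yau's theorem. The first genuinely new instance is $(n,k)=(3,2)$, which I would attempt by a Demailly--Paun-type threshold argument along the family $\alpha_t=\alpha+t\omega$; the difficulty is that the stability of Conjecture~\ref{conj:2} is no longer the positivity of a single $(1,1)$-class but a family of intersection inequalities in different codimensions, so the cone characterization of Demailly--Paun does not apply as cleanly as in the two-dimensional $J$-flow case. In higher dimensions, as already anticipated for Conjecture~\ref{conj:main}, one should expect deformation to the normal cone of subvarieties to be insufficient and a broader class of test-configurations to be necessary.
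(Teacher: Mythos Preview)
Your proposal is correct and matches the paper's own treatment: since this is a conjecture rather than a theorem, the paper likewise only sketches the necessary direction, and your modifications---the inner product weighted by $\binom{n}{k}\alpha^k\wedge\omega^{n-k}/n!$, the replacement of a single divisor by an intersection $D_1\cap\cdots\cap D_k$ of $k$ generic members of $|\alpha|$ in the definition of $b_0'$, the Ross--Thomas calculation for deformation to the normal cone, and the observation that $k=n$ reduces to Yau's theorem---are exactly those the paper records. Your additional direct pointwise argument via $\sigma_k(A)=\mathrm{tr}(\wedge^k A)$ is the natural $\sigma_k$ analogue of the paper's eigenvalue argument for the $J$-flow case, and your assessment that the sufficient direction remains open is correct.
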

Most of the results of the paper have suitable parallels in this
case, and we will briefly describe the modifications that need to be
made.

For the Bergman kernel expansion in Section~\ref{sec:Bergman}
we must use the inner product
\[ \langle s,t\rangle_{L^2} = k^n \int_M \langle s, t\rangle_{h^k}
\frac{\alpha^k \wedge \omega^{n-k}}{k! (n-k)!}. \]
To obtain the analogus result to Theorem~\ref{thm:lower} we define
$F_\alpha(\chi)$ as in Definition~\ref{defn:F}, but instead of letting
$D\subset M$ be a sufficiently general element of the linear series
defined by $\alpha$, we must take $D_1\cap \ldots\cap D_k\subset M$,
for $k$ sufficiently general elements. The coefficient $b_0'$ is
defined using the induced test-configuration for this intersection.

For deformation to the normal cone of a subvariety $V\subset M$, the
discussion is entirely analogous to that in
Section~\ref{sec:slope}. Note that if $\dim V < k$, then $V$ will be
disjoint from a generic intersection $D_1\cap \ldots \cap D_k$. This
is the reason why we only consider $V$ with $\dim V \geqslant k$ in
Conjecture~\ref{conj:2}. When $k=n$, then there is no condition at
all, and correspondingly in this case Equation~\eqref{eq:sigmak} is
just a prescribed volume form equation for $\omega$, which by Yau's
solution of the Calabi conjecture~\cite{Yau78} always has a solution. 

Analogous calculations can also be made to those in
Section~\ref{sec:ruled} following the work of
Fang-Lai~\cite{FL12}. The extension of Theorem~\ref{thm:torus1},
however, is more subtle, since one does not automatically obtain
$C^2$-bounds on the Legendre transform of the solution. We leave a
further study of this equation to future work.

\section{Appendix: smooth forms as averages of currents}
The goal of this section is to prove the following.
\begin{thm}\label{thm:averages}
  Suppose that $\alpha\in c_1(K)$, where $K$ is a very ample line
  bundle over $M$. Let $|K|$ denote the projective space of sections
  of $K$. There is a smooth signed measure $\mu$ on $|K|$ such that
  \[ \alpha = \int_{|K|} [D]\,d\mu(D), \]
  where $[D]$ denotes the current of integration along a divisor $D\in
  |K|$. The equality can be interpreted in the weak sense, i.e. for
  any smooth $(n-1,n-1)$-form $\beta$ we have
  \[ \int_M \alpha\wedge\beta = \int_{|K|}
  \left(\int_D\beta\right)\,d\mu(D). \]
\end{thm}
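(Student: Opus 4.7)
The plan is to split the task into a cohomological part, handled by a classical Crofton formula on projective space, and a smooth correction, handled by inverting a Radon-type integral transform. Since $K$ is very ample, its sections yield an embedding $\iota\colon M\hookrightarrow\mathbb{P}(V^*)$ with $V=H^0(M,K)$, under which $|K|=\mathbb{P}(V)$ parametrizes hyperplane sections of $M$. On $\mathbb{P}(V^*)$ the Fubini--Study form can be written as $\omega_{FS}=\int_{|K|}[H]\,d\mu_0(H)$ against the unique $U(V)$-invariant probability measure $\mu_0$, and pulling back gives a particular representative $\omega_0:=\iota^*\omega_{FS}\in c_1(K)$ as $\int_{|K|}[D]\,d\mu_0(D)$. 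By the $\ddb$-lemma, $\alpha=\omega_0+\ddb\phi$ for a smooth real $\phi$, so it is enough to construct a smooth signed measure $\nu$ on $|K|$ with $\int d\nu=0$ and $\int_{|K|}[D]\,d\nu(D)=\ddb\phi$; then $\mu=\mu_0+\nu$ does the job.

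To reformulate this remaining step, fix any smooth Hermitian metric $h$ on $K$. By the Poincar\'e--Lelong formula, each $[D_s]$ differs from the curvature $c_1(K,h)$ by a $\ddb$-exact term proportional to $\ddb\log|s|^2_h$. Integrating against $\nu$ and using $\int d\nu=0$ to kill the curvature contribution gives
\[ \int_{|K|}[D]\,d\nu(D)=C\,\ddb\Psi_\nu,\qquad \Psi_\nu(x):=\int_{|K|}\log|s_D(x)|^2_h\,d\nu(D), \]
with a universal positive constant $C$. Thus the theorem reduces to showing that every $\phi\in C^\infty(M)$ arises, modulo an additive constant, as $\Psi_\nu$ for some smooth signed measure $\nu$ on $|K|$ of total mass zero.

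The main obstacle is this inverse problem. Parametrising $d\nu=f\,d\mu_0$ with $\int f\,d\mu_0=0$, I would study the linear integral operator $T\colon f\mapsto\Psi_{f\mu_0}$. Its kernel $\log|s_D(x)|_h^2$ has only mild logarithmic singularities along the universal incidence divisor $\mathcal{D}\subset M\times|K|$, which can be controlled by applying Poincar\'e--Lelong to the universal section of $\pi_M^*K\otimes\pi_{|K|}^*\mathcal{O}(1)$ on $M\times|K|$; from this one sees that $T$ sends $C^\infty$ densities to $C^\infty$ functions and is in fact a smoothing operator. Surjectivity modulo constants should then follow from the closed range theorem once the formal adjoint $T^*\colon C^\infty(M)\to C^\infty(|K|)$ is shown to be injective modulo constants. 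A function $\psi$ with $T^*\psi$ constant would satisfy $\int_M\psi(x)\log|s(x)|^2_h\,dV(x)$ constant in $s\in V$, and differentiating this identity along variations of $s$ while using that $V$ separates jets on $M$ (by very ampleness) would force $\psi\equiv 0$. The jet-separation input is precisely where the very ample hypothesis enters essentially; verifying this injectivity together with the closed-range structure of $T$ is the technical heart of the argument.
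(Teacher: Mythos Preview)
Your reduction---Crofton for the cohomological representative plus an integral-transform inversion for the $\ddb$-correction---is exactly the shape of the paper's argument. The difference is in how the inverse problem is solved. The paper never attempts to invert your operator $T$ on $M$ directly. Instead it first treats the special case $M=\mathbf{P}^N$, $K=\mathcal{O}(1)$: there, taking the trace of $\alpha_F=\int_{|K|}[D]\,F\,d\mu_{FS}$ with respect to $\omega_{FS}$ shows that $\Lambda_{\omega_{FS}}\alpha_F$ at a point $p$ equals the integral of $F$ over the hyperplanes through $p$, i.e.\ the classical projective Radon transform of $F$. Helgason's theorem says this transform is a bijection $C^\infty(\mathbf{P}^{N*})\to C^\infty(\mathbf{P}^N)$, so every smooth form in $c_1(\mathcal{O}(1))$ is an $\alpha_F$. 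For general $(M,K)$ one then simply extends the potential $\phi$ from $\iota(M)$ to a smooth $\tilde\phi$ on the ambient $\mathbf{P}^N$, applies the special case to $\tilde\alpha=\omega_{FS}+\ddb\tilde\phi$, and restricts back. The very-ampleness is used only to produce the embedding; no subtle jet-separation or functional analysis is needed.

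Your direct attack on $M$ has a real gap at its ``technical heart''. You call $T$ a smoothing operator and then invoke the closed range theorem, but these two assertions pull against each other: a genuinely smoothing (hence compact) operator between infinite-dimensional Fr\'echet spaces does not have closed range, so injectivity of $T^*$ would not yield surjectivity of $T$. In fact the kernel $\log|s_D(x)|_h^2$ is singular along the incidence divisor, so $T$ is not smoothing in that sense; it is closer to a Fourier integral operator, and one would need a genuine microlocal analysis (or the trick above, since $\Delta\circ T$ is essentially the Radon transform) to establish closed range. Your proposed proof of injectivity of $T^*$ is also only a sketch: the functional $s\mapsto\int_M\psi\,\log|s|_h^2\,dV$ is not obviously differentiable in $s$ in a way that lets jet-separation force $\psi\equiv 0$. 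The paper's extension-to-$\mathbf{P}^N$ device sidesteps all of this by reducing to a case where the transform is classically known to be bijective.
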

This result may be well-known to experts, but we have
not found this statement in the literature.
\begin{proof}
  We first prove the statement in the special case when
  $M=\mathbf{P}^n$ and $K=\mathcal{O}(1)$. Let us denote by
  $\mathbf{P}^{n^*}$ the dual projective space, and let $\mu_{FS}$ be
  the Fubini study volume form. Let $F$ be a smooth function on
  $\mathbf{P}^{n^*}$ with integral 1, and define 
  \[ \alpha_F = \int_{\mathbf{P}^{n^*}} [D] F(D)\,d\mu_{FS}(D). \]
  Then $\alpha_F\in c_1(\mathcal{O}(1))$, and we can compute
  $\alpha_F$ if we know its trace $\Lambda_{\omega_{FS}}\alpha_F$
  relative to the Fubini-Study metric. Indeed if
  \[ \alpha_F = \omega_{FS} + \ddb \phi, \]
  then
  \[ \Lambda_{\omega_{FS}}\alpha_F = n + \Delta_{\omega_{FS}}\phi, \]
  so $\phi$ can be recovered from the trace by solving the Poisson
  equation. We claim that at a point $p\in \mathbf{P}^n$ we have
  \[ \Lambda_{\omega_{FS}}\alpha_F(p) = \int_{H_p}
  F(D)\,d\mu_{FS}(D),\]
  where $H_p\subset\mathbf{P}^{n^*}$ denotes the hyperplanes passing through $p$, and
  $\mu_{FS}$ is the natural Fubini-Study measure on $H_p$. This
  equality follows from symmetry considerations, and the fact that
  $\alpha_F$ at the point $p$ depends only on the values of $F$ on
  $H_p$.

  In order to show that any $\alpha\in c_1(\mathcal{O}(1))$ is of the
  form $\alpha_F$, we simply need to show that any smooth function $G$
  on $\mathbf{P}^n$ can be written in the form
  \[ G(p) = \int_{H_p} F(D)\,d\mu_{FS}(D), \]
  for some smooth $F$ on $\mathbf{P}^{n^*}$. This transformation from
  $F$ to $G$ is a generalized Radon transform, and Theorem 4.1 in
  Helgason~\cite{Helg64} says that it is a one-to-one mapping. Our
  result therefore follows for the special case when
  $M=\mathbf{P}^n$.

  In the general case, use sections of $K$ to embed
  $\iota:M\hookrightarrow \mathbf{P}^N$, such that
  $K=\iota^*\mathcal{O}(1)$. We have
  \[ \alpha = \iota^*\omega_{FS} + \ddb \phi, \]
  for some smooth $\phi$, 
  so if $\widetilde{\phi}:\mathbf{P}^N\to\mathbf{R}$ denotes a smooth
  function such that $\iota^*\widetilde{\phi}=\phi$, then we have
  \[ \alpha = \iota^*(\omega_{FS} + \ddb \widetilde{\phi}) =
  \iota^*\widetilde{\alpha}, \]
  for a suitable $\widetilde{\alpha}\in c_1(\mathcal{O}(1))$. By the
  special case of our result, there is a smooth function
  $F:\mathbf{P}^{N^*}\to\mathbf{R}$ such that
  \[ \widetilde{\alpha} = \int_{\mathbf{P}^{N^*}} [D]
  F(D)\,d\mu_{FS}(D). \]
  Restricting to $\iota(M)\subset\mathbf{P}^N$, the result follows. 
\end{proof}

\providecommand{\bysame}{\leavevmode\hbox to3em{\hrulefill}\thinspace}
\providecommand{\MR}{\relax\ifhmode\unskip\space\fi MR }
\providecommand{\MRhref}[2]{%
  \href{http://www.ams.org/mathscinet-getitem?mr=#1}{#2}
}
\providecommand{\href}[2]{#2}

\end{document}